\newcommand{\tab}{\hspace{10mm}}
\newcommand{\co}{{:}}
\newcommand{\bimpl}{\leftrightarrow}
\newcommand{\halt}{{\downarrow}}
\newcommand{\impl}{\rightarrow}
\newcommand{\Tomega}{\RightLabel{$(\omega)$}}
\newtheorem{theorem}{Theorem}[section]
\newtheorem{lemma}[theorem]{Lemma}
\newtheorem{prop}[theorem]{Proposition}
\newtheorem{example}[theorem]{Example}
\theoremstyle{definition}
\newtheorem{definition}[theorem]{Definition}
\theoremstyle{remark}
\newtheorem{remark}[theorem]{Remark}
\newtheorem*{theorem*}{Lemma}
\newtheorem*{lemma*}{Lemma}
\newtheorem*{definition*}{Definition}
\newtheorem*{remark*}{Remark}
\title{Properties of Selector Proofs}
\author{Elijah Gadsby\footnote{Mathematics PhD Program, Graduate Center CUNY, 365 Fifth Ave., New York City, NY 10016, USA. egadsby@gradcenter.cuny.edu}}
\begin{document}
\maketitle
\begin{abstract}
    A serial property is a suitably enumerated sequence $\{F_n\}$ of formulas and is called selector provable in PA if there is a PA-recursive function $s(x)$ such that $\text{PA}\vdash \forall x (s(x){:}_{\text{PA}} \ulcorner F_x\urcorner)$ where $x{:}_{\text{PA}} y$ is a suitable proof predicate. These notions were introduced by Artemov in his analysis of consistency and the formalisation of metamathematics. These matters aside, the notion is intimately connected with that of relative consistency.

    This paper will give an overview of the mathematical properties of selector proofs. Topics include: the relationship between selector proofs, ordinary provability and the omega rule, the relationship between selector proofs and relative consistency, iterated selector proofs, and the complexity of selectors.
\end{abstract}

\section{Introduction}
A central feature of logic is the notion of a scheme: from schematic axiomatisations of Hilbert-style proof systems to the axiom schemes of induction or replacement in PA or ZFC. It is common to say that PA proves the least number principle or the principle of collection. This raises the question of what it means to prove a scheme and how this can be formalised.

Artemov \cite{ArtemovSelectorProofs} proposes that the appropriate notion is that of a selector proof. A serial property is an (effectively enumerated) sequence $\{F_n\}$ of formulas. A selector proof for this serial property in a theory $T$ consists of a $T$-recursive function $s(x)$, called a selector, such that
\[T\vdash \forall x (s(x)\co_T F_x)\]
where $x\co_T y$ is a proof predicate for $T$. The principal connection to standard provability is that if a scheme $\{F_n\}$ is selector provable in $T$, then each instance is provable. We shall offer a body of counterexamples where the converse fails, suggesting that selector proofs offer new insights about provability.

A special case of interest is where the serial property consists of all instances of a particular formula. Artemov \cite{ArtemovSelectorProofs} shows that the consistency scheme $\{\neg(\overline{n}\co_{\text{PA}}\bot)\}$ is selector provable in PA, so that PA can selector prove its own consistency. Artemov argues that the serial property of consistency is the appropriate arithmetical representation of the consistency of PA and that his selector proof shows that PA does prove its own consistency. We will not adjudicate the matter here. Discussions can be found in \cite{ArtemovConFormula},\cite{ArtemovSelectorProofs},\cite{Ignjatovic}. Our focus will be on the mathematical properties of selector proofs and their utility as a technical tool in proof theory.

Artemov's example is not an isolated one: further useful formulas which are selector provable but whose universal closure is not can be found in section 3. However, there are limits: Kurahashi and Sinclaire independently noted that PA cannot selector prove the consistency of PA+Con(PA) (see \cite{ArtemovSelectorProofs}). The explanation of this phenomenon comes from the fact that a $\Delta_1$ formula $F(x)$ is selector provable in $T$ precisely when $T\vdash $\text{Con}(T)$\impl \forall x F(x)$ (see Prop \ref{ig}). Applied to consistency schemes, this yields a result first noted by Ignjatović (\cite{Ignjatovic}) that $T$ selector proves the consistency of a theory $W$ precisely when $T\vdash $\text{Con}($T$)$\impl $\text{Con}($W$), i.e. precisely when $W$ is relatively consistent with $T$.\footnote{Caveats must be made regarding whether the selector proofs offered by this method are contentual in Artemov's sense. See Remark \ref{contentiality}.} In particular, $T$ cannot selector prove the consistency of any theory containing $T+\text{Con}(T)$.

While this provides a firm upper limit on Artemov's program, it indicates that selector proofs of consistency could be a useful tool for obtaining such relative consistency results. In fact, this method has already been successfully employed by Freund and Pakhomov (\cite{FreundPakhomov}) in their proof-theoretic treatment of slow consistency. They consider the statement $\text{SRefl}_{\Sigma_1}$(PA) of slow $\Sigma_1$ reflection and construct a primitive recursive function $s(x)$ such that
\[I\Sigma_1\vdash \forall (s(x)\co_{\text{PA}}\neg (x\co_{\text{PA}+\text{SRefl}_{\Sigma_1}(\text{PA}))}\bot))\]
and from this conclude that
\[I\Sigma_1\vdash \text{Con(PA)}\impl \text{Con(PA}+\text{SRefl}_{\Sigma_1}(\text{PA}))\]
A consequence of this is that for each $\alpha<\epsilon_0$, PA selector proves the consistency of $\text{PA+SCon}^\alpha$(PA), where $\text{SCon}^\alpha$(PA) is the $\alpha$th iterate of the slow consistency statement. While these results had been earlier established by model-theoretic methods (see \cite{FreundSlowRefl},\cite{HenkPakhomov}),\footnote{The case for finite $n$ was established model theoretically by Rathjen, Friedman, and Weiermann \cite{RFWSlowCon} Sec 4. These cases also follow from a bound on the length of the Gentzen reduction procedure in terms of $F_{\epsilon_0}$.} as far as the author is aware, all proof-theoretic treatments of this subject have involved the method of selector proofs.

From the perspective of Artemov's program, these results also show that despite the limitations provided by the Kurahashi-Sinclaire observation, there are infinitely many proper sound extensions of PA in the language of arithmetic whose consistency is selector provable in PA.

Our goal is to present the beginnings of a general theory of selector proofs. Section 2 contains arithmetical preliminaries while Section 3 presents a useful and representative body of examples of formulas that are selector provable in PA, but not provable. Section 4 discusses the classification of selector provability in terms of extensions of the theory $T$. The aforementioned result regarding $\Delta_1$ formulas is examined, and it is shown that there is no analogous characterisation for more complex classes of formulas. Iterated selector provability is also considered and it is shown that such iterated selectors provide no more power than a single one. Section 5 compares selector provability with verifiable deduction in $\omega$-logic to obtain a precise classification of the formulas selector-provable in PA. While provability in PA corresponds to verifiable $\omega$-logic derivations of height $<\epsilon_0$, selector provability corresponds to such derivations of height $\leq \epsilon_0$. A computational analysis of the recursive functions whose totality can be selector proven in PA is also given. Finally, Section 6 will discuss the complexity of selectors. While elementary or primitive recursive selectors seem to suffice in natural cases, in principle all provably recursive functions of a theory are necessary.

\section{Preliminaries}
The basic setup will consist of two consistent, elementary-recursively axiomatisable theories $S$ and $T$ formulated in the language of arithmetic $\{0,S,+,\cdot,<\}$ such that $S\subseteq T$.\footnote{Analogous results could be obtained when $S$ or $T$ are theories interpreting (a sufficient amount of) arithmetic, but we stick to this case for simplicity.} The base theory $S$ will be assumed to be $\Sigma_1$-sound and to contain $I\Sigma_0+\text{Exp}$ where Exp is a sentence asserting the totality of $\lambda x.2^x$ according to a suitable $\Sigma_0$ definition of its graph (see \cite{HajekPudlak} sec 5.3(c)]).\footnote{Note that no soundness assumptions are being made regarding $T$ beyond its consistency. When such assumptions are required, they will be explicitly noted.} This ensures that it is strong enough to code sets, sequences, syntax, and to define well-behaved partial truth predicates for the language of arithmetic. When concreteness is required, we shall use PA as a representative example.

We fix a suitable coding of sequences and sets and use $[n_0,...,n_{k-1}]$ as shorthand for the code of the sequence of length $k$ whose entries are $n_0$,...,$n_{k-1}$. For sequences $\sigma$ and $\tau$, $(\sigma)_i$ refers to the $i$th element of $\sigma$ and $\sigma*\tau$ the concatenation of $\sigma$ and $\tau$.

\begin{definition}
    Let $\mathcal{F}$ be set of functions on $\mathbb{N}$. The elementary closure $\mathcal{E}(\mathcal{F})$ of $\mathcal{F}$ is the smallest set of functions that contains the basic functions, truncated subtraction, the functions in $\mathcal{F}$, and is closed under composition and bounded sums and products. 
    
    The functions in $\mathcal{E}(\mathcal{F})$ will be called elementary recursive in $\mathcal{F}$ and a relation is elementary recursive in $\mathcal{F}$ if it's characteristic function is. If $\mathcal{F}=\{f\}$, then we shall just write $\mathcal{E}(f)$ and refer to functions being elementary recursive in $f$. The elementary recursive functions are the functions in $\mathcal{E}(\emptyset)$. Note that $\lambda x.2^x\in \mathcal{E}(\emptyset)$
\end{definition}

Relations in $\mathcal{E}(\mathcal{F})$ are closed under Boolean connectives, bounded quantifiers and bounded minimisation. The base theory $S$ is sufficiently strong that all elementary recursive functions are provably recursive, and all elementary recursive relations are provably $\Delta_1$. 

We fix a G\"odel numbering and a standard proof predicate $x\co_Ty$ such that $x\co_Ty$ is provably $\Delta_1$ in $S$ and all of the relevant synctactic operations are $S$-recursive. The corresponding provability predicate for $T$ will be denoted by $\Box_T$. In general, we shall omit G\"odel codes and simply write $\Box A$ in place of $\Box\underline{\ulcorner A\urcorner}$. If we write $\Box A(x)$ for a formula containing a free variable $x$, then we mean that the result of substituting the $x$th numeral in place of the variable $x$ is provable ($\Box \ulcorner A(\dot{x})\urcorner$ in the usual dot notation). The same conventions will apply to $x\co_Ty$ and to partial truth predicates. Since $S$ is sufficiently strong, $\Box_T$ satisfies the Hilbert Bernays conditions (with free variables) and strong $\Sigma_1$ completeness provably in $S$.

Uniform $\Sigma_n$ reflection for $T$ is the sentence
\[\text{Refl}_{\Sigma_n}(T):= \forall x(\Sigma_n(x)\land \Box_T(x)\impl Tr_{\Sigma_n}(x))\]
where $\Sigma_n(x)$ is an elementary recursive formula defining the $\Sigma_n$ formulas and $Tr_{\Sigma_n}$ is a $\Sigma_n$ partial truth predicate. $\text{Refl}_{\Pi_n}(T)$ is defined analogously.

In Section 4, we will employ the notion of $n$-provability. An overview can be found in \cite{BeklemishevReflection}.

\begin{definition}
    For each $n$, the $n$-provability predicate $[n]_T$ for $T$ is defined by
    \begin{equation}
        [n]_T(x) := \exists y (Tr_{\Pi_n}(y)\land \Box_T(Tr_{\Pi_n}(y)\impl x))
    \end{equation}
    The formula $[n]_T(x)$ is thus $\Sigma_{n+1}$.
\end{definition}
The relevant properties of this notion are the following.
\begin{lemma}[\label{nprov}\cite{BeklemishevReflection} Sec 2.3]If $T$ is sound, then the following hold:
\begin{enumerate}
    \item $n$-provability for $T$ satisfies the Hilbert Bernay's Conditions with free variables (in $S$)
    \item (Strong $\Sigma_{n+1}$ completeness) For each $\Sigma_{n+1}$ formula $F(\vec{x})$, $S\vdash F(\vec{x})\impl [n]_TF(\vec{x})$
    \item $S\vdash [0]_T(x)\bimpl \Box_T(x)$
    \item $S\vdash \neg [n]_T\bot \bimpl \text{Refl}_{\Pi_{n+1}}(T)$
\end{enumerate}
\end{lemma}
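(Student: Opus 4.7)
The plan is to prove the four clauses in the order (3), (2), (1), (4), since (1) uses (2) and (4) relies on the disquotation style of reasoning set up in (3); throughout I argue inside $S$ using the provably-in-$S$ disquotation properties of the partial truth predicates $Tr_{\Pi_n}$ and $Tr_{\Sigma_n}$, available because $S$ contains $I\Sigma_0 + \text{Exp}$. For (3), I unfold $[0]_T(x) \equiv \exists y(Tr_{\Pi_0}(y) \land \Box_T(Tr_{\Pi_0}(y)\impl x))$: the forward direction takes $y := \ulcorner 0 = 0\urcorner$, and the reverse uses that $Tr_{\Pi_0}$ is essentially $\Delta_0$, so provable $\Sigma_1$-completeness of $\Box_T$ gives $Tr_{\Pi_0}(y) \impl \Box_T Tr_{\Pi_0}(y)$, from which propositional logic under the box yields $\Box_T(x)$.

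For (2), I write an arbitrary $\Sigma_{n+1}$ formula $F(\vec x)$ as $\exists y\, G(\vec x, y)$ with $G \in \Pi_n$. Arguing in $S$, from $F(\vec x)$ I extract a witness $b$ with $G(\vec x, b)$. Uniform disquotation for $Tr_{\Pi_n}$ yields $Tr_{\Pi_n}(\ulcorner G(\dot{\vec x}, \dot b)\urcorner)$, while the elementary tautology $T \vdash G(\dot{\vec x}, \dot b) \impl F(\dot{\vec x})$ together with disquotation inside the box gives $\Box_T(Tr_{\Pi_n}(\ulcorner G(\dot{\vec x}, \dot b)\urcorner) \impl F(\dot{\vec x}))$, so existential introduction completes $[n]_T F(\vec x)$. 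Clause (1) then falls out: necessitation reuses the witnessing trick of (3), the $K$ axiom exploits that conjunctions of $\Pi_n$ codes are $\Pi_n$ with $S$ verifying the conjunction lemma for $Tr_{\Pi_n}$, and the $4$ axiom $[n]_T A \impl [n]_T [n]_T A$ is a direct instance of (2), since $[n]_T A$ is visibly $\Sigma_{n+1}$.

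Clause (4) is the substantive one and I treat both directions by contraposition. For $(\Rightarrow)$, assume $\neg[n]_T\bot$ and, in $S$, let $A \equiv \forall y\, P(y) \in \Pi_{n+1}$ with $P \in \Sigma_n$; suppose $\Box_T A$ but some numeric $b$ makes $\neg P(b)$ true. Since $\neg P(b) \in \Pi_n$, disquotation gives $Tr_{\Pi_n}(\ulcorner \neg P(\dot b)\urcorner)$, while $\Box_T A$ yields $\Box_T P(\dot b)$ by instantiation; combining with the $T$-provable disquotation $Tr_{\Pi_n}(\ulcorner \neg P(\dot b)\urcorner) \bimpl \neg P(\dot b)$ produces $\Box_T(Tr_{\Pi_n}(\ulcorner \neg P(\dot b)\urcorner) \impl \bot)$, witnessing $[n]_T\bot$, a contradiction. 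For $(\Leftarrow)$, assume $\text{Refl}_{\Pi_{n+1}}(T)$ and $[n]_T\bot$; extract $y_0$ with $Tr_{\Pi_n}(y_0)$ and $\Box_T(\neg Tr_{\Pi_n}(\dot{y_0}))$. The sentence $\neg Tr_{\Pi_n}(\dot{y_0})$ is $\Sigma_n$ and hence $\Pi_{n+1}$ (up to a dummy universal), so reflection yields its truth, contradicting $Tr_{\Pi_n}(y_0)$. The main obstacle in this plan is the uniform formalisation in $S$ of disquotation for $Tr_{\Pi_n}$ in free variables under numeral substitution, particularly in (2) and in the $(\Rightarrow)$ direction of (4); the reasoning is standard but bookkeeping-heavy, and is precisely where the assumption that $S$ can formalise its own syntax substantively enters.
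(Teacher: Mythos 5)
Your argument is correct and is essentially the standard one: the paper gives no proof of this lemma, citing it directly from Beklemishev's survey (Sec.\ 2.3), and your reconstruction --- witnessing $[n]_T$ by a trivially true $\Pi_n$ sentence for (3) and necessitation, extracting a witness and quoting it under $Tr_{\Pi_n}$ for (2), deriving the 4-axiom as an instance of (2), conjoining witnesses for the K-axiom, and the two contradiction arguments for (4) --- matches the proof in the cited source. The bookkeeping you flag (provable-in-$S$ free-variable disquotation and commutation for the partial truth predicates) is exactly what the paper's standing assumption that $S \supseteq I\Sigma_0+\text{Exp}$ defines ``well-behaved'' partial truth predicates is meant to supply, so there is no gap.
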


We close the preliminaries with a brief review of the provably recursive functions of PA. (For more information, see \cite{Arai},\cite{Wainer},\cite{Sommer}.)  Let $\prec$ be an elementary recursive well-order of order type $\epsilon_0+1$. We shall use Greek letters for variables ranging over ordinals $\preceq \epsilon_0$, i.e. elements of the field of $\prec$.  

The Wainer Hierarchy is the Hierarchy of fast growing functions defined by transfinite recursion with
\begin{flalign*} 
    &F_0(x) = x+1 \\
    &F_{\alpha+1}(x) = F_\alpha^{x+1}(x) \\
    &F_\lambda(x) = F_{\lambda[x]}(x) \text{ for $\lambda$ a limit ordinal}
\end{flalign*}
where $\lambda[x]$ is a fundamental sequence for $\lambda$ and $g^m$ is the $m$th iterate of $g$.

The following result by Sommer shows that we can talk about such functions in arithmetic.

\begin{lemma}[\cite{Sommer} Sec 5.2]\label{Sommer}
    There is a $\Sigma_0$ formula $F_\alpha(x)=y$ with free variables $\alpha,x,y$ that uniformly defines the graphs of $F_\alpha$ for $\alpha<\epsilon_0$. Furthermore, $I\Sigma_0+\text{Exp}$ proves that for each $\alpha$, this formula defines a monotone partial function.
\end{lemma}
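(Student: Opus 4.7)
The plan is to define $F_\alpha(x)=y$ by a $\Sigma_0$ formula asserting the existence of a suitably bounded \emph{computation trace}. I would introduce a $\Sigma_0$ predicate $\mathrm{Comp}(c,\alpha,x,y)$ where $c$ codes a finite sequence of triples $(\beta_i,u_i,v_i)$, each read as the assertion $F_{\beta_i}(u_i)=v_i$. The predicate demands that every triple is justified by one of three locally checkable rules: the base clause $F_0(u)=u+1$; the successor clause, justifying $(\gamma+1,u,v)$ by the presence within $c$ of an iteration chain $(\gamma,u,v_0),(\gamma,v_0,v_1),\dots,(\gamma,v_{u-1},v)$; and the limit clause, justifying $(\lambda,u,v)$ by the presence of a fundamental-sequence witness $\lambda[u]=\gamma$ (using the elementary recursive function defining $\lambda[\cdot]$) together with the triple $(\gamma,u,v)$. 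Each rule can be expressed with bounded quantification over indices into $c$, so $\mathrm{Comp}$ is $\Sigma_0$.

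Second, to obtain a $\Sigma_0$ graph, I would bound $c$ elementarily in $y$. The key observation is that in any valid trace witnessing $F_\alpha(x)=y$, every intermediate value $v_i$ is $\leq y$ and every ordinal $\beta_i$ is $\leq \alpha$, so the size of each individual triple is elementary in $y$. By organising the trace to avoid storing redundant sub-instances (so that each distinct triple $(\beta,u,v)$ appears at most once), one bounds $|c|$ by an elementary function of $y$, say $2^{p(y)}$ using the $\Sigma_0$ graph of exponentiation supplied by Exp. We then set
\[F_\alpha(x)=y \ :\Longleftrightarrow\ \exists c \leq 2^{p(y)}\ \mathrm{Comp}(c,\alpha,x,y),\]
which is a $\Sigma_0$ formula in the free variables $\alpha,x,y$.

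Third, to verify in $I\Sigma_0+\mathrm{Exp}$ that this relation defines a monotone partial function for each fixed $\alpha$, I would work inside a given computation trace and prove by $\Sigma_0$-induction on the position of a triple that for any triples $(\beta,u,v)$ and $(\beta,u',v')$ appearing in valid traces, $u=u'$ forces $v=v'$ and $u<u'$ forces $v<v'$. The base case is immediate, the limit case reduces directly to the inductive hypothesis for the smaller ordinal $\lambda[u]$, and the successor case follows since composing a strictly increasing function with itself preserves strict monotonicity; uniqueness follows from the same argument.

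The main obstacle is the bounding step. Naive traces blow up rapidly because nested successor clauses force long iteration chains, each of whose links spawns further nested traces. Producing a coding in which the trace is bounded elementarily in $y$ alone, rather than in $F_\alpha(x)$ for some larger $\alpha$, requires careful compression (e.g.\ sharing repeated sub-instances and coding iteration chains by their endpoints plus a pointer). This is the delicate technical content of Sommer's construction in \cite{Sommer}, and once it is in place the soundness and monotonicity clauses are verified by routine $\Sigma_0$-induction available in $I\Sigma_0+\mathrm{Exp}$.
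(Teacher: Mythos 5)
The paper does not actually prove this lemma---it is imported wholesale from Sommer, and the only hint it gives of the underlying construction is the remark in Example 2.6 that a computation is coded as an elementary-recursive \emph{strictly decreasing sequence of ordinals} (i.e.\ a linear, Hardy-hierarchy-style computation sequence, using $F_\alpha = H_{\omega^\alpha}$), rather than your tree-like trace of locally justified triples with sharing. Your outline is nevertheless the standard alternative route and the three local clauses (base, iteration chain of length $u+1$, fundamental-sequence step) are stated correctly; the linear-descent coding just makes the bounding problem you worry about largely evaporate, since the length of the descending sequence is directly tied to the output.

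There is, however, one concretely false step: the bound $\exists c \leq 2^{p(y)}$ with $p$ depending on $y$ alone cannot work. The trace must contain the triple for $\alpha$ itself and for every ordinal met along the descent, so $|c|$ is at least the size of the code of $\alpha$; but $y = F_\alpha(x)$ can be tiny while the code of $\alpha$ is arbitrarily large (e.g.\ $F_\alpha(0)$ collapses to a small number under the usual fundamental-sequence conventions, for $\alpha$ of unbounded notation size). Moreover each application of $\lambda \mapsto \lambda[u]$ can inflate notation codes, so the intermediate ordinal codes are elementary in the code of $\alpha$ and $y$ jointly, not in $y$ alone. The repair is harmless---bound $c$ by $2^{p(\max\{\alpha, x, y\})}$, which keeps the formula $\Sigma_0$ in the free variables $\alpha, x, y$---but as written the displayed definition of the graph is wrong. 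The monotonicity argument also needs a little more care than ``induction on the position of a triple,'' since it compares two distinct traces; the usual device is $\Sigma_0$-induction on a numerical measure of the justification depth inside the (coded, hence bounded) pair of traces, with the induction hypothesis kept $\Sigma_0$ by bounding all quantifiers by the trace codes. With those two repairs your sketch is a faithful reconstruction of the cited construction.
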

$F_{\epsilon_0}$ can then be defined directly by $F_{\epsilon_0} (x)=F_{\omega_{x+1}} (x)$ where $\omega_0(\alpha)=\alpha$, $\omega_{n+1}(\alpha)=\omega^{\omega_n(\alpha)}$ and $\omega_n=\omega_n(1)$.

Let $\alpha\impl_x\beta$ mean that there is a sequence $\alpha=\alpha_0,\alpha_1,...,\alpha_k=\beta$ such that $\alpha_{i+1}=\alpha_i[k]$ and let $|\alpha|$ be the number of $\omega$s in the complete Cantor normal form of $\alpha$ (where coefficients are not used). The following fact will be needed.

\begin{lemma}[\cite{Arai} 4.10, 4.12]\label{redprop} \textcolor{white}.
    \begin{enumerate}
        \item If $\omega^\alpha\impl_k\omega^\beta$, then $F_\beta(k)\geq F_\alpha(k)$
        \item If $\alpha\to_k\beta$, and $k\leq l$, then $\alpha\to_l\beta$
        \item If $\beta<\alpha$ then $\alpha\to_{|\beta|}\beta$
    \end{enumerate}
    
\end{lemma}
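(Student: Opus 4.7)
The plan is to prove each of the three items by an induction adapted to the relevant measure, leveraging the recursive definitions of the Wainer hierarchy and of the fundamental sequence assignment. Throughout, I would rely on the standard identities $F_{\alpha+1}(k) = F_\alpha^{k+1}(k)$ and $F_\lambda(k) = F_{\lambda[k]}(k)$, together with standard properties of fundamental sequences: monotonicity in the index and agreement with the Cantor normal form.

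For (1), I would induct on the length of the reduction sequence $\omega^\alpha = \gamma_0, \gamma_1, \ldots, \gamma_n = \omega^\beta$. The base case $n=0$ gives equality. For the inductive step, it suffices to establish that each individual step $\gamma \to \gamma[k]$ satisfies $F_{\gamma[k]}(k) \geq F_\gamma(k)$. When $\gamma$ is a limit this is immediate from $F_\gamma(k) = F_{\gamma[k]}(k)$. The subtle point is that intermediate ordinals in the reduction need not be of the form $\omega^\delta$; for instance, a step through $\omega^{\delta+1}$ may pass via $\omega^\delta \cdot (k+1)$. One then needs the auxiliary inequality $F_{\omega^\delta \cdot (k+1)}(k) \geq F_{\omega^{\delta+1}}(k)$, which follows by unpacking the definition of $F$ on ordinals of the form $\omega^\delta \cdot (m+1)$ as an $m$-fold iterate of $F_{\omega^\delta}$. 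Composing these step-wise inequalities yields the claim.

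For (2), I would prove the stronger auxiliary statement: for any limit $\gamma$ and $k \leq l$, there is a $[l]$-reduction $\gamma \to_l \gamma[k]$. Given this, any single $[k]$-step in the original sequence can be simulated by a finite $[l]$-reduction, and concatenating these gives $\alpha \to_l \beta$. To prove the auxiliary statement, I would induct on $\gamma$ in Cantor normal form and compare $\gamma[k]$ with $\gamma[l]$ -- since $\gamma[k] \leq \gamma[l]$ for $k \leq l$, one invokes (3) applied at the parameter $l$ (or, to avoid circularity, a direct structural descent) to continue from $\gamma[l]$ down to $\gamma[k]$.

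For (3), the most intricate item, I would induct on $\alpha$. If $\alpha = \gamma+1$ is a successor, then $\alpha[|\beta|] = \gamma$, and either $\beta = \gamma$ (done in one step) or $\beta < \gamma$ and the IH applies. If $\alpha$ is a limit, the key lemma is that $\beta \leq \alpha[|\beta|]$: this requires comparing the Cantor normal forms of $\alpha$ and $\beta$ and checking that a fundamental sequence with parameter $|\beta|$ is "fine enough" to remain above $\beta$. Once this is known, we descend $\alpha \to \alpha[|\beta|]$ in one step and apply the IH to reach $\beta$ from $\alpha[|\beta|]$, possibly after invoking (2) to accommodate any small discrepancy in the parameter. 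The hard part is the case analysis establishing $\beta \leq \alpha[|\beta|]$: it depends delicately on how $|\cdot|$ counts occurrences of $\omega$ when coefficients are eliminated by expansion, and on a careful treatment of the leading term of $\alpha$ versus that of $\beta$. This is precisely where the specific choice of the complexity measure $|\cdot|$ pays off, since it is calibrated so that the first $|\beta|$-many descents through the fundamental sequence of $\alpha$ are guaranteed to overshoot $\beta$ only after a final controlled step. I would follow Arai's exposition for this combinatorial verification rather than reproving it from scratch.
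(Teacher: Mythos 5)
The first thing to note is that the paper does not prove this lemma at all: it is imported verbatim from Arai (4.10, 4.12) as background, so there is no internal argument to compare yours against. Your outlines of (2) and (3) follow the standard Ketonen--Solovay-style route (simulate each $[k]$-step by a finite $[l]$-reduction using $\gamma[k]\le\gamma[l]$; for (3), induct on $\alpha$ via the key estimate $\beta\le\alpha[|\beta|]$ at limits). These are the right strategies, but you explicitly defer the combinatorial core of (3) and the auxiliary descent lemma needed for (2) to the source, so as a self-contained proof the hard content of those two items is still missing.

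The genuine error is in your treatment of (1). Your reduction to the step-wise claim $F_{\gamma[k]}(k)\ge F_{\gamma}(k)$ fails at successor steps: if $\gamma=\delta+1$ then $\gamma[k]=\delta$ and $F_{\delta+1}(k)=F_{\delta}^{k+1}(k)\ge F_{\delta}(k)$, so the inequality runs the \emph{other} way, and such steps genuinely occur in reductions between powers of $\omega$ (e.g.\ $\omega\to_k\omega[k]\to_k\cdots\to_k 1$ passes through finitely many successor steps). Along any $\to_k$-reduction the quantity $F_{(\cdot)}(k)$ is non-increasing (equality at limit steps, a strict drop at successor steps for $k\ge 1$), so the provable conclusion is $F_{\alpha}(k)\ge F_{\beta}(k)$, not $F_{\beta}(k)\ge F_{\alpha}(k)$; indeed $\omega^{1}\to_k\omega^{0}$ with $F_{0}(k)=k+1<2k+1=F_{1}(k)$ refutes the inequality as printed. (The reversed form $F_{\alpha}(k)\ge F_{\beta}(k)$ is also exactly what the paper uses later, in the proof of Theorem \ref{companalysis}, where $\epsilon_0\to_{g(\vec{n})}\alpha$ is taken to yield $F_{\epsilon_0}(g(\vec{n}))\ge F_{\alpha}(g(\vec{n}))$; the statement of item 1 thus appears to contain a transcription error, and your attempt inherits it.) Your auxiliary inequality $F_{\omega^{\delta}\cdot(k+1)}(k)\ge F_{\omega^{\delta+1}}(k)$ is just the limit clause holding with equality and does not address the problematic successor steps, so the argument for (1) as written does not go through.
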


The following is a classic result of the proof-theory.
\begin{theorem}[\cite{Wainer} 4.12] \label{class}\textcolor{white}{.}

\begin{enumerate}
\item If $2\leq\alpha<\beta\leq\epsilon_0$ then $\mathcal{E}(F_\alpha)\subsetneq \mathcal{E}(F_\beta)$. In particular, $F_\beta$ dominates every function in $\mathcal{E}(F_\alpha)$.
\item The provably recursive functions of PA are precisely those in $\cup_{\alpha<\epsilon_0}\mathcal{E}(F_\alpha)$
\end{enumerate}
\end{theorem}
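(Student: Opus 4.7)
My plan for part (1) is to prove by induction on the build-up of $\mathcal{E}(F_\alpha)$ that every function it contains is eventually majorised by $F_\beta$, in the sense that $f(\vec{x}) < F_\beta(\max \vec{x} + c_f)$ for some constant $c_f$. The base cases (projections, successor, truncated subtraction, and the function $F_\alpha$ itself) are handled directly: for $F_\alpha$ one uses Lemma \ref{redprop}(3) to get $\beta \to_{|\alpha|} \alpha$, and then Lemma \ref{redprop}(1) (combined with a short calculation to pass from $\omega^\gamma$-form to arbitrary $\gamma$) to conclude that $F_\beta$ dominates $F_\alpha$ for all sufficiently large arguments. The inductive steps for composition and for bounded sums and products reduce to the standard fact that $F_\beta$ eats finite compositions and polynomial blow-ups once one shifts the argument by a constant, which in turn follows from the definition $F_{\beta+1}(x) = F_\beta^{x+1}(x)$ together with monotonicity. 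Strict inclusion then falls out: $F_\beta \in \mathcal{E}(F_\beta)$, but $F_\beta$ cannot sit inside $\mathcal{E}(F_\alpha)$ since every member of the latter is eventually dominated by $F_\beta$.

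For part (2) I would split into two directions. For the inclusion $\bigcup_{\alpha < \epsilon_0} \mathcal{E}(F_\alpha) \subseteq \mathrm{ProvRec}(\mathrm{PA})$, the idea is that PA proves transfinite induction up to every $\alpha < \epsilon_0$ (Gentzen), which combined with Lemma \ref{Sommer} and the recursive definition of $F_\alpha$ yields that PA proves the totality of $F_\alpha$ for each such $\alpha$. Since the provably recursive functions of PA form a class closed under elementary recursive operations, this extends to all of $\mathcal{E}(F_\alpha)$.

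The reverse inclusion is the hard part and constitutes the core of the Gentzen-style ordinal analysis of PA. The strategy is to embed any PA-proof into an $\omega$-logic derivation of height $<\epsilon_0$, then to apply cut-elimination to obtain a cut-free derivation whose height is bounded by some $\omega_n(\alpha)$ with $\alpha < \epsilon_0$. From a cut-free $\omega$-derivation of a $\Pi_2$ totality statement $\forall x \exists y\, \phi(x,y)$ one can extract an explicit witness function, and a careful majorisation argument shows the witness is bounded by $F_\alpha$ up to elementary manipulation, placing it in $\mathcal{E}(F_\alpha)$.

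The main obstacle, unsurprisingly, is the ordinal bookkeeping in this last step: each cut-reduction raises the height by following a fundamental sequence, so one needs a precise accounting of the interaction between cut-rank and ordinal height in order to keep the final bound below $\epsilon_0$. This is exactly the technical heart of Gentzen's consistency proof, and rather than reproduce it I would appeal to the treatments in \cite{Wainer} and \cite{Arai}, which carry out the analysis in the form needed here.
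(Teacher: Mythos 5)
The paper does not actually prove this theorem: it is imported as a black box from \cite{Wainer} 4.12, so there is no in-paper argument to compare against. Your outline is the standard one, and for the hard direction of (2) it matches exactly the machinery the paper does import elsewhere (Lemma \ref{complemma} is precisely the embedding/cut-elimination/witness-extraction triple you describe), so deferring the ordinal bookkeeping to \cite{Wainer} and \cite{Arai} is appropriate here. One step in your part (1) needs repair, though: the induction invariant $f(\vec{x}) < F_\beta(\max\vec{x} + c_f)$ does not close under composition, since composing two functions with such bounds yields something like $F_\beta(F_\beta(\max\vec{x}+c)+c')$, which is not of the form $F_\beta(\max\vec{x}+c'')$; and the identity $F_{\beta+1}(x)=F_\beta^{x+1}(x)$ that you invoke shows that $F_{\beta+1}$, not $F_\beta$, absorbs iterates of $F_\beta$. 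The standard fix is to run the induction at level $\alpha$: show every $f\in\mathcal{E}(F_\alpha)$ satisfies $f(\vec{x})\le F_\alpha^{k_f}(\max\vec{x}+c_f)$, which does close under composition and bounded sums and products because $F_\alpha(y)\ge y+1$; then observe that $F_\alpha^{k}(x+c)\le F_{\alpha+1}(x)$ once $x\ge k+c$, and finally that $F_\beta$ eventually majorises $F_{\alpha+1}$ via Lemma \ref{redprop} since $\alpha+1\le\beta$. With that adjustment the strict inclusion and the domination claim both fall out as you say, and the rest of your sketch is sound.
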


In section 5, we will be interested in the complexity of selectors for serial properties selector provable in PA. This information could be obtained by a thorough analysis of the arguments, but it will be more convenient to simply work in a theory for which the provably recursive functions are understood. For a definable partial function $f(\vec{x})$, we write $f(\vec{x})\halt$ for $\exists y(f(\vec{x})=y)$ and $f\halt$ for $\forall \vec{x}\exists y (f(\vec{x})=y)$ where $f(\vec{x})=y$ is the formula defining its graph. The following result will be useful.

\begin{prop}[\cite{Arai} 2.13]\label{er}
    Let $f(\vec{x})$ be a monotone function with a $\Sigma_0$ definable graph such that $I\Sigma_0+\text{Exp}$ proves that this graph defines a partial function.
    \begin{enumerate}
        \item Every function that is provably recursive in $I\Sigma_0+\text{Exp}+f\halt$ is elementary recursive in $f$. 
        \item If $I\Sigma_0+\text{Exp}+f\halt$ proves that $f$ is monotone, then every function in $\mathcal{E}(f)$ is provably recursive in $I\Sigma_0+\text{Exp}+f\halt$.
    \end{enumerate}
\end{prop}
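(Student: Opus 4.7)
The plan is to prove part 2 first (the easier direction) by structural induction on the generation of $\mathcal{E}(f)$, and then part 1 by a Herbrand-style witness extraction argument tailored to $I\Sigma_0+\text{Exp}+f\halt$.

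For part 2, I would show by induction on the build-up of $\mathcal{E}(f)$ that every $g\in\mathcal{E}(f)$ is provably recursive in $U := I\Sigma_0+\text{Exp}+f\halt$, under the assumption that $f$ is provably monotone in $U$. The basic functions and truncated subtraction are already provably recursive in $I\Sigma_0+\text{Exp}$, and $f$ is provably recursive by virtue of the axiom $f\halt$. Closure under composition is immediate. The essential case is bounded sums and products: given a provably recursive $g$, I would first strengthen the induction hypothesis to prove provable monotonicity of $g$ in $U$ (by a parallel structural induction using provable monotonicity of $f$), so that there is an explicit upper bound $G(n,\vec{x})$ for $g$ on $[0,n)$ with $G\in\mathcal{E}(f)$. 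The sequence of partial sums can then be coded as a bounded sequence — its code being bounded by a quantity whose existence follows from $\text{Exp}$ together with a bound on $n\cdot G(n,\vec{x})$ — and its existence and defining equations proved by $\Sigma_0$ induction on $n$. This yields totality of $\sum_{i<n}g(i,\vec{x})$ in $U$; bounded products are handled analogously.

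For part 1, I would adapt the standard witness extraction showing that the provably recursive functions of $I\Sigma_0+\text{Exp}$ are elementary recursive. Suppose $U\vdash \forall x\exists y\,\phi(x,y)$ with $\phi\in\Sigma_0$. Treating $f$ as a function symbol whose graph is $\Sigma_0$-definable and $f\halt$ as an additional $\Pi_2$ axiom, I would apply a Herbrand-style analysis to the proof: the existential witness for any instance $f\halt(\vec{a})$ can always be taken to be $f(\vec{a})$ itself, so after Skolemization the only new function symbol needed is $f$. The Herbrand term witnessing $y$ is then built by composition and bounded schemas from the standard elementary apparatus together with $f$, hence lies in $\mathcal{E}(f)$. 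This can be packaged via cut elimination for a suitable sequent calculus for $I\Sigma_0(f)+\text{Exp}+f\halt$ with $\Sigma_0$ induction in the expanded language, following, e.g., Sieg's or Buss's treatment of bounded theories with extra function symbols.

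The main obstacle will be the bounded sum/product case in part 2: one needs an explicit elementary-in-$f$ bound on the code of the sequence of partial sums, which in turn relies on the provable monotonicity of all previously constructed functions in $\mathcal{E}(f)$. Establishing this auxiliary monotonicity claim (by the same structural induction) and carrying out the coding within $I\Sigma_0+\text{Exp}$ is the technical heart of the argument, and is precisely where the provable-monotonicity hypothesis of part 2 is indispensable. For part 1, the analogous subtlety is ensuring that $\Sigma_0$ induction applied to formulas in the language with $f$ does not introduce witnesses outside $\mathcal{E}(f)$; the standard Herbrand/cut-elimination machinery handles this cleanly, but only because $f\halt$ is present as an axiom so that existential witnesses arising during the analysis can be discharged by $f$ itself.
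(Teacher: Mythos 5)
The paper does not prove this proposition at all: it is imported directly from Arai's monograph (cited there as 2.13), so there is no in-paper argument to compare against. Your sketch follows the standard proof of that result and its overall architecture is sound, but one step would fail as literally stated. In part 2 you propose to ``strengthen the induction hypothesis to prove provable monotonicity of $g$'' for every $g\in\mathcal{E}(f)$ by a parallel structural induction. That auxiliary claim is false: $\mathcal{E}(f)$ contains truncated subtraction, which is antitone in its second argument, so it contains many non-monotone functions and the induction breaks already at that base case (and at every composition through it). What the standard argument actually establishes is that every $g\in\mathcal{E}(f)$ is provably \emph{majorized} by a monotone function (a term built from $+$, $\lambda x.2^x$, $\max$ and $f$ --- here is where the monotonicity hypothesis on $f$ enters), which is all you need to bound the code of the sequence of partial sums; your very next sentence reaches for exactly such an upper bound $G(n,\vec{x})$, so the repair is local. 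The rest --- coding the partial sums below a bound supplied by $\text{Exp}$ and $\Sigma_0$ induction for part 2, and for part 1 a Herbrand/cut-elimination witnessing analysis in which the $\Pi_2$ axiom $f\halt$ is Skolemized by $f$ itself so that all witnessing terms land in $\mathcal{E}(f)$ --- is the standard route and is acceptable at this level of detail.
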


\section{Selector-Provable Properties and Formulas}
In this section, we formally introduce selector provability, provide some useful and representative examples of selector provable formulas and schemes. None of the examples are new.

\begin{definition}
    A serial property $\{F_n\}$ is an $S$-recursively enumerated sequence $\{F_n\}$ of formulas. We say that the serial property $\{F_n\}$ is selector provable in $T$ (over the base theory $S$) if there is an $S$-recursive function $s(x)$ such that
    \[S\vdash \forall x (s(x)\co_T F_x)\]
\end{definition}

We emphasise that while the choice of (standard) proof predicate is unimportant, all of our results are all relative to a fixed proof predicate for $T$. For results when the proof predicate is permitted to vary, see \cite{SantosSiegKahle}.

We generally omit reference to $S$ and refer to such formulas as selector provable in $T$. The definition extends naturally to multi-indexed schemes. Of particular interest is the case where the serial property consists of all numerical instances of a particular formula. 

\begin{definition}
    Let $F(\vec{x})$ be a formula. We say $F(\vec{x})$ is selector provable in $T$ (over $S$) if there is an $S$-recursive function $s(\vec{x})$ such that
    \[S\vdash \forall x(s(\vec{x})\co_T F(\vec{x}))\]
\end{definition}

\begin{remark}
    Call a formula $F(\vec{x})$ implicitly selector provable in $T$ (over $S$) if $S\vdash \forall \vec{x} \Box_TF(\vec{x})$. Since $S$ proves the least number principle for the $\Delta_1$ formula $y\co_TF(\vec{x})$ it follows that if $F(\vec{x})$ is implicitly selector provable, then it is selector provable. (The $\Sigma_1$ soundness of $S$ is necessary here to ensure that the result is a genuine recursive function.) However, the notions remain distinct. For any $T$-recursive function $r(\vec{x})$, one has $T\vdash \forall \vec{x}(r(\vec{x})\co_TF(\vec{x}))\impl F(\underline{\vec{n}})$ for each $\vec{n}$ assuming only the consistency of $T$.\footnote{Since $r$ is a genuine recursive function, there is $m$ such that $T\vdash r(\vec{n})=m$. The result then follows from the fact that $T\vdash \underline{m}\co_TF(\underline{\vec{n}})\impl F(\underline{\vec{n}})$.} The notions also diverge sharply when iterated selectors are considered (see Proposition \ref{iterated} and Remark \ref{hirohiko}).

    For simplicity of notation and argumentation, we shall generally work with implicit selector proofs. For negative results and proof-theoretic purposes, they are entirely adequate. The constructions used in positive results could all be carried out entirely using explicit selectors if desired.\footnote{In particular, all uses of strong $\Sigma_1$ completeness are applied to formulas $\Delta_1$ in $S$ and so could be replaced with explicit $S$-recursive verifying functions.}
\end{remark}

We begin some useful and representative examples of formulas that are selector provable (but not provable). None of the examples are new, though they serve as a nice illustration of the concept and will be employed in later sections.

The first example is Artemov's result.
\begin{example}[Artemov \cite{ArtemovSelectorProofs}]
Let $T$ be formulated in the language of arithmetic.\footnote{The argument applies more generally to any elementary-recursively axiomatised $T$ which can define well-behaved partial truth predicates.} Then $T$ selector proves its own consistency over $I\Sigma_0+\text{Exp}$, i.e. there is an elementary recursive function $s(x)$ such that
\[I\Sigma_0+\text{Exp}\vdash \forall x (s(x)\co_T\neg(x\co_T\bot)) \]
\end{example}

Next, we have the principle of explicit reflection $\underline{n}\co_T A\impl A$ for a sentence $A$. It was first notice by G\"odel (\cite{GodelWorks}) that, unlike the implicit reflection principle $\Box_T A\impl A$, all instances of explicit reflection for $T$ are provable in $T$. 

\begin{example}\label{lemma}[Feferman \cite{FefermanTransfiniteRP}, see also \cite{BeklemishevReflection} 2.2]
    Let $F_n$ be an elementary recursive enumeration of all sentences. Then
    \begin{equation}
        S\vdash \forall xy\Box_T(x\co_T F_y \impl F_y)
    \end{equation}
    In particular, for each formula $F(\vec{x})$
    \begin{equation}\label{pointwise}
        S\vdash \forall \vec{x}y\Box_T(y\co_T F(\vec{x}) \impl F(\vec{x}))
    \end{equation}
\end{example}

The next example concerns transfinite induction. We confine ourselves to PA. For a formula $A(x)$ with free variable $x$, 
\[\text{Prg}(A) := \forall \alpha((\forall\beta\prec\alpha) A(\beta)\impl A(\alpha))\]
and
\[\text{TI}(\alpha,A):= \text{Prg}(A)\impl (\forall\beta\prec\alpha)A(\beta)\]

\begin{example}[\cite{Arai} 4.5, \cite{FreundPakhomov}]\label{TI}
For each formula $A(x,\vec{y})$
\[I\Sigma_0+\text{Exp}\vdash (\forall\alpha\prec\epsilon_0)\forall\vec{y}\,\Box_{\text{PA}}\text{TI}(\alpha,A(x,\vec{y}))\]
\end{example}

\begin{proof}
    A classical result of Gentzen is that $\text{PA}\vdash \text{TI}(\underline{\alpha},A(x,\vec{y}))$ for each $\alpha<\epsilon_0$. Formalising this argument yields the result.
\end{proof}

\begin{example}\label{Fhalt}\cite{FreundPakhomov}
    $I\Sigma_0+\text{Exp}\vdash \forall x \Box_{\text{PA}}F_{\epsilon_0}(x)\halt$
\end{example}
\begin{proof}
    Following \cite{Sommer}, a computation of $F_{\epsilon_0}(n)$ is coded as an elementary-recursive strictly decreasing sequence of ordinals below $\omega_{n+2}$. The sequence continues until it reaches some $m<\omega$ with said $m$ the value of $F_{\epsilon_0}(n)$. That such a sequence terminates follows from a suitable instance of transfinite induction so the result follows from Example \ref{TI}.
\end{proof}

\begin{remark}\label{complowerbound}
    It follows that $\text{PA}\vdash\forall x\Box_{\text{PA}} F_{\epsilon_0}(g(x))\halt$ for any PA-recursive function $g$, in particular, $\text{PA}\vdash \forall x \Box_{\text{PA}} F_{\epsilon_0}(F_{\underline{\alpha}}(x))\halt$ for each $\alpha<\epsilon_0$. Proposition \ref{companalysis} shows that this result is optimal.
\end{remark}

Further examples of formulas that are selector provable in PA can be found by consulting the traditional independence results. Details of the statements can be found in \cite{BeklemishevReflection},\cite{HajekPudlak},\cite{ParisKirby}.

\begin{prop}
    The following are selector provable in PA:
    \begin{enumerate}
        \item The totality of the Goodstein function (i.e. every special Goodstein sequence terminates)
        \item Hercules always wins the Paris-Kirby hydra battle 
        \item Beklemishev's worm principle
        \item The Paris Harrington Principle
    \end{enumerate}
    For 1-3, $I\Sigma_0+\text{Exp}$ suffices as a base theory, while for 4, we use $I\Sigma_1$ for simplicity.
\end{prop}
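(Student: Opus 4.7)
The plan is to reduce each instance, uniformly in its parameters, to a suitable amount of transfinite induction below $\epsilon_0$ for some auxiliary formula, and then invoke Example~\ref{TI}. For each of the four principles, the classical independence proofs assign to each instance $F_n$ an ordinal $\alpha(n) \prec \epsilon_0$ and an auxiliary formula $A_n(x)$ such that $\text{PA} \vdash \text{TI}(\alpha(n), A_n) \impl F_n$, with the assignment $n \mapsto \alpha(n)$ together with the corresponding proof transformation being elementary recursive for items 1--3 and primitive recursive for item 4. Composing this with the uniform selector for $\text{TI}(\alpha, A)$ supplied by Example~\ref{TI} yields the required selector for $\{F_n\}$.

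For the Goodstein totality, the hydra battle, and the worm principle, the standard arguments assign ordinals below $\epsilon_0$ to the combinatorial configurations (numbers for Goodstein, hydrae, worms) whose strict descent along the respective game witnesses termination. These ordinal assignments and the accompanying descent lemmas are elementary recursive: the Goodstein assignment, for instance, replaces each base in the hereditary base-$2$ representation of $n$ with $\omega$, and the hydra and worm assignments are analogous. Formalising the termination argument then yields, elementary recursively in $n$, a PA-proof of $\text{TI}(\alpha(n){+}1, A_n) \impl F_n$ for an elementary $A_n$ coding ``the sequence has not yet terminated''. Example~\ref{TI} supplies the missing antecedent elementary recursively, giving the desired selector.

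For the Paris--Harrington principle, one invokes the Ketonen--Solovay analysis, which bounds the least PH-witness for parameters $(n,k,l)$ by $F_{\alpha(n,k,l)}(m)$ for a primitive recursive ordinal assignment $\alpha(n,k,l) \prec \epsilon_0$ and an elementary $m$. The formalisation produces, primitive recursively in the parameters, a PA-proof of the conditional $F_{\alpha(n,k,l)}(m)\halt \impl \text{PH}(n,k,l)$. Combining this with the selector for $F_\alpha\halt$ from Example~\ref{Fhalt} and Remark~\ref{complowerbound} gives the required selector. The strengthening of the base theory to $I\Sigma_1$ reflects the fact that the ordinal assignment and the verification of the underlying Ramsey-theoretic bounds are primitive recursive rather than elementary, and $\Sigma_1$-induction is convenient for running the uniform Ketonen--Solovay estimates inside the selector construction.

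The main obstacle is Paris--Harrington: one must verify that the Ketonen--Solovay analysis can be carried out uniformly within $I\Sigma_1$, so that the map $n \mapsto s(n)$ producing PA-proofs is genuinely $I\Sigma_1$-recursive rather than merely total in some stronger metatheory. For items 1--3, by contrast, the classical arguments are textbook reductions to $\epsilon_0$-induction, and the elementary recursive verification is routine once Example~\ref{TI} is in hand.
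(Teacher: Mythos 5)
Your treatment of items 1--3 matches the paper's in substance: each instance is reduced, elementary recursively, to an instance of transfinite induction below $\epsilon_0$ and Example \ref{TI} supplies the selector (the paper handles the worm principle by citing Beklemishev's direct argument rather than re-running an ordinal assignment, but that is a presentational difference). For the Paris--Harrington principle you take a genuinely different route. The paper avoids ordinal analysis entirely: it works in $\text{ACA}_0$, observes that $RT(0)$ and $RT(x)\impl RT(x+1)$ are provable there (so each $RT(\underline{n})$ has a proof obtained by uniformly chaining $n$ implications), derives $RT(x)\impl \forall q\, PH(q,x)$ by imitating the classical compactness argument, and then pushes the resulting selector down to PA via formalised conservativity of $\text{ACA}_0$ over PA --- this is where $I\Sigma_1$ is used. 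You instead invoke the Ketonen--Solovay bounds, reducing each instance to the totality of a fast-growing function at a level $\alpha(n)\prec\epsilon_0$ and then appealing to the selector for such totality statements. That works, but note that Example \ref{Fhalt} and Remark \ref{complowerbound} as stated give pointwise halting of $F_{\epsilon_0}$, not a selector for the family of totality statements $\{F_{\alpha}\halt\}_{\alpha\prec\epsilon_0}$; the latter is what you actually need, and it follows from Example \ref{TI} by the same argument as Example \ref{Fhalt}, so this is a citation mismatch rather than a gap. The trade-off: your route requires verifying that the Ketonen--Solovay estimates formalise uniformly in $I\Sigma_1$ (a nontrivial bookkeeping exercise, as you acknowledge), whereas the paper's route replaces all ordinal-theoretic content with an appeal to formalised conservativity, which is comparatively soft.
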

\begin{proof}
    For 1 and 2, the statements can be proven using transfinite induction up to $\epsilon_0$. However, each instance only requires transfinite induction up to a particular ordinal $<\epsilon_0$, whence the results follow from Proposition \ref{TI}. 3 was established directly by Beklemishev (see \cite{BeklemishevReflection} Lemma 5.21).

    For 4, one can take a detour through $\text{ACA}_0$. Let $RT(x)$ be the infinite Ramsey theorem in dimension $x$ (see \cite{Simpson}). Although $\forall x RT(x)$ is not provable in $\text{ACA}_0$ Simpson \cite{Simpson} shows that
    \[\text{ACA}_0\vdash RT(0)\]
    and
    \[\text{ACA}_0\vdash RT(x)\impl RT(x+1)\]
    Imitating the standard proof of the Paris Harrington Principle from the infinite Ramsey Theorem, one can then show
    \[\text{ACA}_0\vdash RT(x)\impl \forall q PH(q,x)\]
    Putting this together yields
    \[I\Sigma_1\vdash \forall x \Box_{\text{ACA}_0} \forall q PH(q,x)\]
    and so 
    \[I\Sigma_1\vdash \forall x \Box_{\text{PA}}\forall q PH(q,x)\]
    by formalising the conservativity of $\text{ACA}_0$ over PA.
\end{proof}

\section{Selector and Ordinary Provability}
Here we investigate the relationship between the selector provability of a formula in theory $T$ and its provability in extensions of $T$. From this, examples are obtained of formulas that are instance provable, but not selector provable, in given theories.

Our starting point is the classification of the selector-provable $\Delta_1$ formulas mentioned in the introduction.

\begin{prop} \label{ig}
Let $F(\vec{x})$ be a formula provably $\Delta_1$ in $S$. Then
\[S\vdash \forall x \Box_TF(\vec{x}) \bimpl (\text{Con}(T)\impl \forall \vec{x} F(\vec{x}))\]
\end{prop}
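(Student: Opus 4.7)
The plan is to prove both directions by invoking provable $\Sigma_1$-completeness, applied on one side to $F$ and on the other to $\neg F$; the two-sided application is precisely what the $\Delta_1$ hypothesis makes available. Throughout, we reason inside $S$, using that $S\subseteq T$ is elementary-recursively axiomatised so that $S$ proves the Hilbert--Bernays conditions and strong $\Sigma_1$-completeness for $\Box_T$.

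For the forward direction, assume $\forall \vec{x}\,\Box_T F(\vec{x})$ together with $\text{Con}(T)$, and aim at $\forall \vec{x}\, F(\vec{x})$. Suppose toward contradiction that $\neg F(\vec{x}_0)$ holds for some $\vec{x}_0$. Since $F$ is provably $\Delta_1$ in $S$, the negation $\neg F$ is provably equivalent in $S$ to a $\Sigma_1$ formula, so provable $\Sigma_1$-completeness gives $\Box_T \neg F(\vec{x}_0)$. Combined with the hypothesis $\Box_T F(\vec{x}_0)$ and closure of $\Box_T$ under modus ponens, this yields $\Box_T \bot$, contradicting $\text{Con}(T)$.

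For the backward direction, assume $\text{Con}(T)\impl \forall \vec{x}\, F(\vec{x})$ and fix an arbitrary $\vec{x}$; we want $\Box_T F(\vec{x})$. I prefer to avoid a case split: apply $\Sigma_1$-completeness to the provably $\Sigma_1$ formula $\neg F$ to obtain $S\vdash \neg F(\vec{x}) \impl \Box_T \neg F(\vec{x})$, and apply it to the provably $\Sigma_1$ form of $F$ to obtain $S\vdash F(\vec{x}) \impl \Box_T F(\vec{x})$. From the contrapositive of the hypothesis, $\neg F(\vec{x}) \impl \neg \text{Con}(T)$, i.e.\ $\neg F(\vec{x}) \impl \Box_T\bot$; composing with $\Box_T\bot \impl \Box_T F(\vec{x})$ (available since $T$ proves $\bot\impl F(\vec{x})$ and $\Box_T$ is closed under provable implication) gives $\neg F(\vec{x}) \impl \Box_T F(\vec{x})$. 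The two implications together yield $\Box_T F(\vec{x})$ unconditionally.

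I do not expect real obstacles: the essential observation is simply that the $\Delta_1$ hypothesis is what lets $\Sigma_1$-completeness act on both $F$ and $\neg F$, and the base theory $S$ is strong enough to formalise this. The only point of hygiene is to be sure that both the $\Sigma_1$-completeness statements and the formalised deduction theorem for $\Box_T$ are available inside $S$, both of which are explicitly recorded in the preliminaries.
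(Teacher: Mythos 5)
Your proof is correct and is essentially the paper's argument: the right-to-left direction is the same case split on $F(\vec{x})$ (strong $\Sigma_1$-completeness in one case, formalised explosion in the other), and your left-to-right direction simply unfolds the paper's one-line appeal to the $S$-provable equivalence of $\text{Con}(T)$ with $\text{Refl}_{\Pi_1}(T)$ by applying $\Sigma_1$-completeness to the provably $\Sigma_1$ formula $\neg F$. The only cosmetic point is that no formalised deduction theorem is needed, just closure of $\Box_T$ under provable implication, which you in fact use.
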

\begin{proof}
The left-to-right direction follows from the fact that $\text{Con}(T)$ is equivalent to $\text{Refl}_{\Pi_1}(T)$ over $S$. From right to left, working in $S$, assume that $\text{Con}(T)\impl \forall \vec{x} F(\vec{x})$ and fix $\vec{x}$. If $F(\vec{x})$ holds, then $\Box_T F(\vec{x})$ by strong $\Sigma_1$ completeness, whereas if $\neg F(\vec{x})$, then $\Box\bot$ by the assumption whence $\Box F(\vec{x})$ by formalised explosion.
\end{proof}

As noted in the introduction, applying this to the formula $\neg x\co_W\bot$ for a theory $W$ yields Ignjatović's result (\cite{Ignjatovic}) 
\[S\vdash \forall x \Box_T \neg (x\co_W\bot) \bimpl(\text{Con}(T)\impl \text{Con}(W))\]
In other words, $T$ selector proves the consistency of $W$ over $S$ precisely when $W$ is relatively consistent with $T$ over $S$, in particular, $T$ cannot selector prove the consistency of any theory containing $T+\text{Con}(T)$.

\begin{remark}\label{contentiality}
    It should be noted that the selector proofs of consistency provided by Proposition \ref{ig} are non-contentual in Artemov's sense (\cite{ArtemovSelectorProofs} Section 5.2). In practice, however, reductions from $W$ to $T$ given by proof-theoretic methods tend to involve effective procedures transforming proofs of a contradiction in $W$ to ones in $T$. Combining this with a contentual selector consistency proof for $T$ in $T$ yields the desired contentual selector consistency proof for $W$.
\end{remark}

From this result we can derive further examples of formulas that are not selector provable, namely generalised reflection principles. Let $\Box_T^0A=A$ and $\Box_T^{n+1}A=\Box_T\Box_T^n A$. Nogina (\cite{Nogina} 2) shows that if $T$ is $\Sigma_1$-sound, then $T\vdash \underline{n}\co_T\Box^kA\impl A$ for each $n,k$ and sentence $A$. However, Unlike the standard explicit reflection statements, these are not selector provable.\footnote{Nogina's argument relies on the $\Sigma_1$ soundness of $T$ to eliminate provability predicates and as such cannot be formalised in $T$.}

\begin{example} For each $k\geq 1$
    \begin{equation}\label{noginalemma}
        S\not\vdash \forall y\Box_T(y\co_T\Box^k\bot\impl \bot)
    \end{equation}
\end{example}
\begin{proof}
    Suppose this were the case. If $k\geq 2$, then since $S\vdash \Box_T\bot\impl \Box_T^k\bot$, there is an elementary recursive function $s(y)$ such that $S\vdash y\co_T\Box\bot\impl s(y)\co_T\Box_T^k\bot$. But then $S\vdash \Box_T(y\co_T\Box_T\bot\impl s(y)\co_T\Box_T^k\bot)$ which combined with (\ref{noginalemma}) yields $S\vdash\forall y\Box_T(y\co_T\Box_T\bot\impl\bot)$.
    
    We may thus assume $k=1$. Combining (4) with a formalisation of the deduction theorem yields $S\vdash \forall x\Box_T\neg (x\co_{T+\text{Con}(T)}\bot)$ which contradicts Proposition \ref{ig}.  
\end{proof}

If a formula is selector provable in $T$ (over $S$), then said formula is provable outright in $S$ augmented with a suitable uniform reflection principle for $T$. Thus, further examples of formulas that are instance but not selector provable in $PA$ can obtained by consulting $\Pi_2$ independence results for stronger theories, for example the Buchholz Hydra Game (see \cite{BuchholzHydra}) or Friedman's miniaturisations of Kruskal's Theorem (see \cite{FriedmanKruskal}).

One may hope that uniform reflection principles would allow for a generalisation of Proposition \ref{ig} to more complex formulas. Unfortunately, the right-to-left direction of (\ref{maineq}) fails. Indeed, no analogous result is available as soon as $\Sigma_1$ or $\Pi_1$ formulas are admitted.

\begin{prop}\label{main}
    Let $\Gamma$ be a set of formulas containing every $\Sigma_1$ formula or every $\Pi_1$ formula. There is no set of sentences $\Delta$ such that for each formula $F(\vec{x})$ in $\Gamma$
    \begin{equation}\label{maineq}
        S\vdash \forall \vec{x} \Box_T F(\vec{x}) \tab \text{iff} \tab S+\Delta\vdash \forall \vec{x} F(\vec{x})
    \end{equation}
\end{prop}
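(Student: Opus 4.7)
The plan is to argue by contradiction, separately for the two scenarios $\Gamma\supseteq\Pi_1$ and $\Gamma\supseteq\Sigma_1$: assume a witnessing $\Delta$ exists, then exhibit two formulas in $\Gamma$ whose requirements under (\ref{maineq}) are mutually incompatible.

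First the $\Pi_1$ case, which admits a short argument. By Artemov's consistency selector, the open $\Pi_1$ formula $F_1(x):=\neg(x\co_T\bot)$ is selector provable in $T$ over $S$, so the left-to-right direction of (\ref{maineq}) forces $S+\Delta\vdash\forall x\,F_1(x)$, that is, $S+\Delta\vdash\text{Con}(T)$. Now take $F_2$ to be the closed $\Pi_1$ formula $\text{Con}(T)$ itself. For a sentence, selector provability in $T$ over $S$ reduces to ordinary $T$-provability: one may take the selector constant, and by $\Sigma_1$-soundness of $S$, any $S$-provable instance $\underline{n}\co_T F_2$ must be a genuine $T$-proof. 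By G\"odel's second incompleteness theorem $T\not\vdash\text{Con}(T)$, so $F_2$ is not selector provable, and the right-to-left direction of (\ref{maineq}) applied to $F_2$ yields $S+\Delta\not\vdash\text{Con}(T)$, contradicting the preceding line.

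The $\Sigma_1$ case cannot be closed by the identical cut, since the universal closure of an open $\Sigma_1$ formula is $\Pi_2$ and so need not itself lie in $\Gamma$. Here I would argue more indirectly: apply the sentence biconditional simultaneously to $\phi$ and to $\Box_T\phi$ (both $\Sigma_1$) and combine with formalised $\Sigma_1$-completeness $S\vdash\phi\impl\Box_T\phi$; this forces $S+\Delta$ to satisfy the local $\Sigma_1$-reflection schema $\Box_T\phi\impl\phi$ for $T$, and in particular $S+\Delta\vdash\text{Con}_n(T):=\neg\Box_T^n\bot$ for every $n$. One then combines this accumulated iterated-consistency strength with a Kurahashi--Sinclaire-style obstruction---for instance, by applying an Artemov-style selector relative to a theory extending $T+\text{Con}(T)$---to produce a specific $\Sigma_1$ formula whose universal closure lies among the $\Pi_2$ consequences of $S+\Delta$ while itself failing to be selector provable in $T$, again contradicting the right-to-left direction of (\ref{maineq}).

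The principal obstacle is the $\Sigma_1$ case: whereas the $\Pi_1$ argument needs nothing beyond Artemov and G\"odel~2, the $\Sigma_1$ case demands one to identify a specific $\Sigma_1$ witness whose selector-unprovability coexists with the iterated reflection strength forced on any $\Delta$ satisfying the left-to-right direction of (\ref{maineq}). Verifying that such a witness exists uniformly across all admissible $\Delta$ is the delicate step.
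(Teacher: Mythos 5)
Your $\Pi_1$ case is correct and is essentially the paper's own argument: Artemov's selector for $\neg(x\co_T\bot)$ forces $S+\Delta\vdash \text{Con}(T)$ via the left-to-right direction of (\ref{maineq}), and then applying the right-to-left direction to the $\Pi_1$ sentence $\text{Con}(T)$ itself, together with the $\Sigma_1$-soundness of $S$, collides with G\"odel's second incompleteness theorem. No issues there.

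The $\Sigma_1$ case, however, has a genuine gap exactly at the step you yourself flag as delicate. Your proposed mechanism --- ``apply the sentence biconditional simultaneously to $\phi$ and to $\Box_T\phi$ and combine with formalised $\Sigma_1$-completeness'' --- does not force local $\Sigma_1$-reflection into $S+\Delta$. Instantiating (\ref{maineq}) at $\phi$ gives $S+\Delta\vdash\phi$ iff $S\vdash\Box_T\phi$, and at $\Box_T\phi$ it gives $S+\Delta\vdash\Box_T\phi$ iff $S\vdash\Box_T\Box_T\phi$; chaining these with $S\vdash\phi\impl\Box_T\phi$ (or its formalisation $S\vdash\Box_T\phi\impl\Box_T\Box_T\phi$) only transfers provability \emph{upward}, i.e.\ from $S+\Delta\vdash\phi$ to $S+\Delta\vdash\Box_T\phi$, which is the wrong direction; extracting reflection this way would require $S\vdash\Box_T\Box_T\phi\Rightarrow S\vdash\Box_T\phi$, which fails in general. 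Nor can you feed the schema $\Box_T\phi\impl\phi$ into (\ref{maineq}) directly, since $\neg\Box_T\phi\lor\phi$ is $\Pi_1\lor\Sigma_1$ and hence not in $\Gamma$. The paper's fix is precisely to replace the provability predicate by a free proof variable: by Feferman's explicit reflection (Example \ref{lemma}) the formula $y\co_T Tr_{\Sigma_1}(x)\impl Tr_{\Sigma_1}(x)$ is selector provable, and since $y\co_T z$ is $\Delta_1$ this formula \emph{is} provably $\Sigma_1$, so the left-to-right direction of (\ref{maineq}) yields $S+\Delta\vdash \text{Refl}_{\Sigma_1}(T)$, hence $S+\Delta\vdash\text{Con}(T+\text{Con}(T))$; the right-to-left direction applied to the $\Delta_1$ formula $\neg(x\co_{T+\text{Con}(T)}\bot)$ then contradicts Proposition \ref{ig}. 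Your endgame (iterated consistency plus the Kurahashi--Sinclaire obstruction) is the right target, but without the explicit-reflection device you have no justified route to it, so the $\Sigma_1$ half of your proof is incomplete as written.
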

Note that no assumptions are made on $\Delta$. It may be infinite and need not be recursive (or even arithmetical).

\begin{proof}
Suppose that $\Delta$ were a set of sentences satisfying (\ref{maineq}). 

First, consider the case where $\Gamma$ contains every $\Pi_1$ formula. By Proposition \ref{ig} and (\ref{maineq}), $S+\Delta\vdash \text{Con}(T)$. Since $\text{Con}(T)$ is a $\Pi_1$ formula and so (\ref{maineq}) yields $S\vdash \forall\vec{x}\Box_T\text{Con}(T)$. But then $T\vdash \text{Con}(T)$ by the $\Sigma_1$ soundness of $S$ which is not the case.

Thus, suppose that $\Gamma$ contains every $\Sigma_1$ formula.
Example \ref{lemma} applied the $\Sigma_1$ formula $Tr_{\Sigma_1}(x)$ yields
\[S\vdash \forall xy \Box_T (y\co_T Tr_{\Sigma_1}(x)\impl Tr_{\Sigma_1}(x))\]
The formula $y\co_T Tr_{\Sigma_1}(x)\impl Tr_{\Sigma_1}(x)$ is provably $\Sigma_1$ in $S$ and so (\ref{maineq}) yields
\[S+\Delta \vdash \forall xy( y\co Tr_{\Sigma_1}(x)\impl Tr_{\Sigma_1}(x))\]
whence $S+\Delta \vdash \text{Refl}_{\Sigma_1}(T)$. But then $S+\Delta \vdash \text{Con}(T+\text{Con}(T))$ and (\ref{maineq}) yields $S\vdash \forall x \Box_T\neg(x\co_{T+\text{Con}(T)}\bot)$, contradicting Proposition \ref{ig}.
\end{proof}

While this approach to classifying the selector provable formulas does not succeed, in the next section we give a precise characterisation using verifiable derivations in $\omega$-logic. In the meantime, we note that an analogous result to Proposition \ref{ig} can be obtained for more complex formulas using $n$-provability.

\begin{prop}
    Suppose that $T$ is sound. If $F(\vec{x})$ is provably $\Delta_{n+1}$ in $S$, then
\begin{equation}
    S\vdash \forall \vec{x} [n]_TF(\vec{x}) \bimpl (\text{Refl}_{\Pi_{n+1}}(T)\impl \forall \vec{x} F(\vec{x}))
\end{equation}
The $n=0$ case is just Proposition \ref{ig}.
\end{prop}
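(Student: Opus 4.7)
The plan is to mirror exactly the proof of Proposition \ref{ig}, with $\Box_T$ replaced by $[n]_T$, strong $\Sigma_1$ completeness replaced by strong $\Sigma_{n+1}$ completeness, and $\text{Con}(T)$ replaced by $\text{Refl}_{\Pi_{n+1}}(T)$. All of the necessary ingredients are supplied by Lemma \ref{nprov}: the Hilbert--Bernays derivability conditions for $[n]_T$ (with free variables), strong $\Sigma_{n+1}$ completeness, and the key identity $S \vdash \neg[n]_T\bot \bimpl \text{Refl}_{\Pi_{n+1}}(T)$, which lets us reinterpret the reflection principle as a consistency-like statement for $[n]_T$.

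For the left-to-right direction, work in $S$ and assume both $\forall \vec{x}\,[n]_T F(\vec{x})$ and $\text{Refl}_{\Pi_{n+1}}(T)$. Fix $\vec{x}$ and suppose toward contradiction that $\neg F(\vec{x})$. Since $F(\vec{x})$ is provably $\Delta_{n+1}$ in $S$, $\neg F(\vec{x})$ is provably equivalent to a $\Sigma_{n+1}$ formula, so strong $\Sigma_{n+1}$ completeness delivers $[n]_T \neg F(\vec{x})$. Combining this with $[n]_T F(\vec{x})$ via the Hilbert--Bernays distribution axiom applied to the $T$-provable $F \impl (\neg F \impl \bot)$ gives $[n]_T \bot$, contradicting $\text{Refl}_{\Pi_{n+1}}(T)$ by Lemma \ref{nprov}(4).

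For the right-to-left direction, assume $S \vdash \text{Refl}_{\Pi_{n+1}}(T) \impl \forall \vec{x}\, F(\vec{x})$, work in $S$, and fix $\vec{x}$. Classical logic permits a case split on $F(\vec{x})$. If $F(\vec{x})$ holds, then since $F$ is provably $\Sigma_{n+1}$, strong $\Sigma_{n+1}$ completeness immediately yields $[n]_T F(\vec{x})$. If $\neg F(\vec{x})$ holds, the hypothesis forces $\neg \text{Refl}_{\Pi_{n+1}}(T)$, hence $[n]_T \bot$ by Lemma \ref{nprov}(4), and formalised explosion (another application of the Hilbert--Bernays conditions for $[n]_T$) yields $[n]_T F(\vec{x})$.

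There is no substantive obstacle here; the proof is pure bookkeeping against Lemma \ref{nprov}. The only subtlety worth noting is that ``provably $\Delta_{n+1}$ in $S$'' should be read as supplying both a $\Sigma_{n+1}$ and a $\Pi_{n+1}$ representative that $S$ proves equivalent to $F$, so that strong $\Sigma_{n+1}$ completeness applies legitimately to both $F$ and $\neg F$ after silently rewriting. Specialising to $n = 0$ collapses the argument to that of Proposition \ref{ig}, via $[0]_T \equiv \Box_T$ and $\text{Refl}_{\Pi_1}(T) \equiv \text{Con}(T)$.
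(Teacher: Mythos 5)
Your proof is correct, and your right-to-left direction is verbatim the paper's argument. The left-to-right direction, however, takes a slightly different route. The paper unfolds the definition of $[n]_T$: from $[n]_TF(\vec{x})$ it extracts a true $\Pi_n$ sentence $y$ with $\Box_T(Tr_{\Pi_n}(y)\impl F(\vec{x}))$, applies $\text{Refl}_{\Pi_{n+1}}(T)$ to that conditional (which is where the $\Pi_{n+1}$ bound on $F$ enters), and concludes $F(\vec{x})$ directly --- no contradiction, no derivability conditions. You instead treat $[n]_T$ as a black box governed by Lemma \ref{nprov}: assume $\neg F(\vec{x})$, use strong $\Sigma_{n+1}$ completeness on the $\Sigma_{n+1}$ representative of $\neg F$, combine with $[n]_TF(\vec{x})$ via the Hilbert--Bernays distribution to get $[n]_T\bot$, and contradict $\text{Refl}_{\Pi_{n+1}}(T)$ through \ref{nprov}(4). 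Both are sound; yours is the more literal transcription of Proposition \ref{ig} (reading $\text{Refl}_{\Pi_{n+1}}(T)$ as ``$n$-consistency'' $\neg[n]_T\bot$) and stays entirely at the axiomatic level of Lemma \ref{nprov}, while the paper's is marginally more direct in that half and makes visible exactly where the $\Delta_{n+1}$ hypothesis is used. Your closing remark about needing both a $\Sigma_{n+1}$ and a $\Pi_{n+1}$ representative, with the equivalence transferring to $T$ via $S\subseteq T$ so that $[n]_T$ respects the rewriting, is the right caveat.
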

\begin{proof}
    Working in $S$, in one direction assume $\forall \vec{x}[n]_TF(\vec{x})$ and $\text{Refl}_{\Pi_{n+1}}(T)$, and fix $\vec{x}$. Then there is $y$ such that $Tr_{\Pi_n}(y)$ and $\Box_T(Tr_{\Pi_n}(y)\impl F(\vec{x}))$. $\text{Refl}_{\Pi_{n+1}}(T)$ then yields $Tr_{\Pi_n}(y)\impl F(\vec{x})$ and thus $F(\vec{x})$ as required.

    In the other direction, working in $S$ assume $\text{Refl}_{\Pi_{n+1}}(T)\impl \forall \vec{x} F(\vec{x})$ and fix $\vec{x}$. If $F(\vec{x})$, then $[n]_TF(\vec{x})$ by \ref{nprov}.2. Meanwhile, if $\neg F(\vec{x})$, then the assumption yields $\neg \text{Refl}_{\Pi_{n+1}}(T)$. So $[n]_T\bot$ by \ref{nprov}.4 and $[n]_TF(x)$ by formalised explosion.
\end{proof}

We now consider iterated selector proofs. Since one additional layer of coding allows for more formulas to become (selector) provable, one may hope that additional layers of coding would allow additional formulas to be proven and thus overcome the limit to Artemov's program.

\begin{definition}
    A formula $F(x)$ is called $2$-selector provable in $T$ if there are $S$-recursve functions $r(x)$,$s(x)$ such that
    \[S\vdash \forall x (s(x)\co_T(r(x)\co_TF(x)))\]
\end{definition}
This could naturally be extended to $n$-selector provability. Unfortunately, iterated selectors do not provide additional power beyond that of a single one. 

\begin{prop}\label{iterated}
    If a formula $F(x)$ is $2$-selector provable in $T$, then it is selector provable in $T$.
\end{prop}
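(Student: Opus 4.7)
The plan is to chain the given $2$-selector proof of $F$ with an $S$-recursive witness for the explicit reflection principle from Example \ref{lemma}, combining them via a formalised modus ponens on codes of proofs. Intuitively, the outer selector $s(x)$ gives, for each $x$, a $T$-proof that $r(x)$ is a $T$-proof of $F(x)$; applying explicit reflection inside $T$ strips off the inner layer of provability and yields a $T$-proof of $F(x)$ itself.

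In detail, let $r, s$ be $S$-recursive with $S \vdash \forall x\, s(x) \co_T (r(x) \co_T F(x))$. By Example \ref{lemma} applied to $F$,
\[S \vdash \forall xy\, \Box_T (y \co_T F(x) \impl F(x)).\]
The relation $z \co_T (y \co_T F(x) \impl F(x))$ is provably $\Delta_1$ in $S$, so the least number principle together with the $\Sigma_1$-soundness of $S$ yields an $S$-recursive function $p(x,y)$ with
\[S \vdash \forall xy\, p(x,y) \co_T (y \co_T F(x) \impl F(x)).\]
Let $\mathrm{mp}(a,b)$ denote the elementary recursive operation on proof codes that $S$-provably satisfies $a \co_T A \land b \co_T (A \impl B) \impl \mathrm{mp}(a,b) \co_T B$, a standard property of the fixed proof predicate. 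Define
\[t(x) := \mathrm{mp}(s(x),\, p(x, r(x))),\]
an $S$-recursive function. Specialising $y$ to the value $r(x)$ in the defining property of $p$, $S$ proves $p(x, r(x)) \co_T (r(x) \co_T F(x) \impl F(x))$; combining this with the hypothesis and modus ponens yields $S \vdash \forall x\, t(x) \co_T F(x)$, as required.

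The only real subtlety is bookkeeping with the numeral conventions: one must verify that substituting the value of $r(x)$ for the object variable $y$ in the formula $y \co_T F(x) \impl F(x)$ (understood via the dot notation of Section 2) produces literally the sentence that appears to the right of the outer $\co_T$ in the $2$-selector hypothesis, so that the two proof codes combine correctly under modus ponens. This is routine under the standard coding conventions, and everything else is just manipulation of $S$-recursive functions and of provable properties of the proof predicate.
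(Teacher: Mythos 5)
Your proof is correct and follows essentially the same route as the paper: both arguments hinge on Example \ref{lemma} (provable explicit reflection) instantiated at $r(x)$, followed by formalised modus ponens. The only difference is that you carry out the construction with explicit proof codes, whereas the paper works with the implicit form $S\vdash \forall x\,\Box_T F(x)$ and appeals to the general remark that implicit selector provability yields an explicit selector via the least number principle and $\Sigma_1$-soundness of $S$.
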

\begin{proof}
    Let $r(x)$ and $s(x)$ be $S$-recursive functions. We show that
    \begin{equation}
        S\vdash s(x)\co_T(r(x)\co_TF(x))\impl \Box_TF(x)
    \end{equation}
    from which the result follows.
    
    Working in $S$, assume $s(x)\co_T(r(x)\co_TF(x))$, so $\Box_Tr(x)\co_TF(x)$. By Lemma \ref{lemma} we have
    \[\forall y \Box(y\co_T\co F(x)\impl F(x))\]
    So $\Box_T F(x)$ follows by instantiating $y$ with $r(x)$ and formalised modus ponens.
\end{proof}

\begin{remark}\label{hirohiko}
This result should be contrasted with that of Kushida (\cite{Kushida} 3.1) who shows that if $T$ is $\Sigma_1$ sound, then for each $n$, there is a formula $F_n(x)$ such that $S\vdash \forall x \Box_T^nF_n(x)$, but $S\not\vdash\forall x \Box^k_TF_n(x)$ for any $k<n$. Although for most purposes they are equivalent, explicit selectors and provability predicates differ sharply in the iterated case.
\end{remark}

As final matter, we consider the conditions under which selectors can be removed from a proof. Given that we have proven $S\vdash \forall \vec{x} \Box_TF(\vec{x})$, under what conditions we conclude that in fact $T\vdash \forall \vec{x}F(\vec{x})$? For theories such as PA that contain full induction in their language, we may use the following criterion.

\begin{definition}
    A selector provable formula $F(\vec{x})$ is called compact (over $T$) if there is a finite fragment $W\subseteq T$ and an $S$-recursive function $s(x)$ such that $S\vdash \forall \vec{x}(s(\vec{x})\co_WF(\vec{x}))$. 
\end{definition}

\begin{prop}
    Let $T$ be a finite extension of PA in the language of arithmetic and suppose and $F(\vec{x})$ is selector provable in $T$. Then $F(\vec{x})$ is compact if and only if $T\vdash \forall \vec{x} F(\vec{x})$.\footnote{The same result applies to ZFC and finite extensions thereof.}
\end{prop}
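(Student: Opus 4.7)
The plan is to prove each direction in turn, with the forward direction $(\Rightarrow)$ doing the real work.

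For the $(\Leftarrow)$ direction, suppose $T\vdash \forall \vec{x}\, F(\vec{x})$. By the compactness of first-order logic, some finite fragment $W\subseteq T$ already carries a fixed proof $p$ of $\forall \vec{x}\, F(\vec{x})$. I define $s(\vec{x})$ elementary recursively by appending to $p$ the appropriate sequence of universal instantiations with the numerals for $\vec{x}$. Since $S\supseteq I\Sigma_0+\text{Exp}$ can verify elementary recursive computations and check the proof predicate $\co_W$, one obtains $S\vdash \forall \vec{x}(s(\vec{x})\co_W F(\vec{x}))$, which is compactness.

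For the $(\Rightarrow)$ direction, let $W\subseteq T$ be a finite fragment and $s$ an $S$-recursive function with $S\vdash \forall \vec{x}(s(\vec{x})\co_W F(\vec{x}))$. Since $S\subseteq T$, this gives $T\vdash \forall \vec{x}\, \Box_W F(\vec{x})$. The formula $F$ is fixed, so it lies in $\Sigma_n$ for some particular $n$, and substitution of numerals preserves this complexity. The remaining task is to strip $\Box_W$ using uniform $\Sigma_n$-reflection for $W$.

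The key lemma to establish is: for any finite fragment $W\subseteq T$ and any fixed $n$, $T\vdash \text{Refl}_{\Sigma_n}(W)$. Here the assumption that $T$ is a finite extension of PA is essential: under it, any finite fragment $W$ is genuinely finitely axiomatized by a conjunction $\psi$ of sentences of bounded complexity. Working in $T$: since $W\subseteq T$, $T\vdash \psi$, and hence $T\vdash Tr_m(\psi)$ via the partial truth predicate $Tr_m$, taking $m$ large enough to dominate the complexities of $\psi$ and of $F$. PA internally verifies that pure first-order deduction preserves $Tr_m$-truth, so $T$ derives that every $W$-provable $\Sigma_n$ sentence is true. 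Combined with $T\vdash \forall \vec{x}\, \Box_W F(\vec{x})$, this yields $T\vdash \forall \vec{x}\, F(\vec{x})$.

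The main obstacle is the reflection step. It depends crucially on $W$ being finitely axiomatized: trying to replace $W$ with all of $T$ would require uniform reflection of $T$ over $T$, contradicting G\"odel. Because $W$ collapses to a single sentence of bounded complexity, $\Sigma_n$-reflection for $W$ reduces to $T$'s knowledge that this sentence is true together with PA's internal soundness of pure predicate logic at each fixed complexity level. The footnote extension to ZFC works for the same reason: finite fragments are single sentences of bounded rank, and ZFC's partial truth machinery handles each fixed rank.
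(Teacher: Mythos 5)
Your decomposition is exactly the paper's: the $(\Leftarrow)$ direction extracts a finite fragment $W$ with $W\vdash\forall\vec{x}\,F(\vec{x})$ and builds the selector by formalized instantiation, and the $(\Rightarrow)$ direction reduces to the fact that $T$, being a finite extension of PA, proves uniform reflection for each of its finite fragments. The paper simply cites Leivant for that reflection fact, whereas you sketch a proof of it; that sketch is where the one real issue lies.

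The step ``PA internally verifies that pure first-order deduction preserves $Tr_m$-truth'' is not correct as stated. A Hilbert-style (or sequent-with-cut) derivation of a low-complexity consequence of $\psi$ may pass through cut formulas of arbitrarily high logical complexity, so there is no single partial truth predicate $Tr_m$ for which a naive induction along the proof goes through --- this is precisely why PA does \emph{not} prove reflection for itself, and the finiteness of $W$ alone does not dissolve the problem. The standard repair is to first invoke cut elimination for pure predicate logic, which \emph{is} provable in PA (the hyperexponential bound on cut elimination is provably total): from $W\vdash F(\vec{x})$ one obtains, provably in $T$, a cut-free sequent derivation of $\neg\psi, F(\vec{x})$, all of whose formulas are subformulas of $\psi$ or of $F$ and hence of complexity bounded by a fixed $m$; the truth-preservation induction with $Tr_m$ then succeeds, and combined with $T\vdash Tr_m(\psi)$ this yields the needed reflection. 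With that insertion (or simply citing Leivant, as the paper does) your argument is complete and coincides with the paper's.
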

It follows that none of the examples in Section 3 are compact. In each case, the selector requires unbounded access to the induction scheme.

\begin{proof}
    The direction from left to right follows from the fact that T proves uniform reflection principles for each of its finite fragments (see \cite{Leivant}). From right to left, if $T\vdash \forall \vec{x} F(\vec{x})$, then $W\vdash \forall \vec{x}F(x)$ for some finite $W\subseteq T$. So $S\vdash \Box_W\forall x F(x)$, whence $S\vdash \forall x \Box_WF(x)$ by formalised instantiation.
\end{proof}

\section{Selector Provability and the $\omega$-Rule}
In this section, we perform a proof-theoretic analysis of selector provability and obtain a precise characterisation of the selector provable formulas of PA in terms of verifiable deductions in $\omega$-logic along with a bound on the growth rate of recursive functions whose totality is selector provable in PA. 

In this section, we shall assume that PA has been formulated in the Tait calculus with constant symbol $0$, a function symbol for the successor and relation symbols for all elementary recursive predicates (see \cite{Arai},\cite{Buchholz}). In particular, addition and multiplication are treated as relations. In $\omega$-logic, one replaces the standard rule for the universal quantifier with the infinitary rule

\begin{prooftree}
    \AxiomC{$\Gamma,\forall x A(x),A(\underline{0})$}
        \AxiomC{$\Gamma,\forall x A(x), A(\underline{1})$}
            \AxiomC{\dots}
    \Tomega
    \TrinaryInfC{$\Gamma,\forall x A(x)$}
\end{prooftree}
Deductions thus become well-founded labelled trees, and free variables become unnecessary. These can be formalised in PA using the indices of partial recursive functions with well-foundedness being approximated by directly assigning ordinals to the nodes of the tree. 

Let $d\co_{\omega}^\alpha A$ denote that $d$ is the code for an $\omega$-logic derivation of the sentence $A$ with height $\leq \alpha$ (see definition \ref{treedef}). The following classification of the PA-provable sentences is known.

\begin{theorem}[\cite{Sundholm} 5(f)]
    A sentence $A$ is provable in PA if and only if $\text{PA}\vdash \underline{e}\co_\omega^{\underline{\alpha}} A$ for some $e$ and $\alpha<\epsilon_0$.
\end{theorem}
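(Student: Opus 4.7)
The plan is to handle each direction using the standard machinery of $\omega$-logic, taking care that everything is formalisable in PA.

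For the left-to-right direction, I translate a PA proof of $A$ into an $\omega$-derivation. The key move is that the induction rule can be eliminated in favour of the $\omega$-rule: once we have uniform finite proofs of $F(\underline{n})$ for each $n$, the $\omega$-rule concludes $\forall x F(x)$. A standard construction converts a PA proof of $A$ into an $\omega$-derivation of $A$ with cuts, of height $<\omega\cdot 2$ and cut rank bounded by the maximal complexity of a formula in the original proof. The index $e_0$ for this derivation is elementary recursive in the PA proof, and PA verifies the construction. Applying formalised cut elimination (see \cite{Buchholz},\cite{Arai}) then transforms this into a cut-free derivation whose height is bounded by an $\omega$-tower of height equal to the cut rank, hence $<\epsilon_0$. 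Since the entire procedure is recursive and its correctness provable in PA, we obtain specific $e$ and $\alpha<\epsilon_0$ with $\text{PA}\vdash \underline{e}\co_\omega^{\underline{\alpha}} A$.

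For the right-to-left direction, the strategy is to prove a soundness schema for $\omega$-logic in PA. For each fixed $\alpha<\epsilon_0$, I show that PA proves: any code $d$ of a cut-free $\omega$-derivation of $B$ with height $\leq \alpha$ has true endsequent. The argument is by transfinite induction on $\alpha$, which PA supports for each fixed $\alpha<\epsilon_0$ by Example \ref{TI}. The truth of the endsequent is expressed via a partial truth predicate $Tr_{\Sigma_k}$ chosen to bound the complexity of the subformulas of $B$. The induction step is routine: at an $\omega$-rule one uses that $A(\underline{n})$ holds for every $n$; axiom and propositional cases are immediate. Given the hypothesis $\text{PA}\vdash \underline{e}\co_\omega^{\underline{\alpha}} A$, applying formalised cut elimination in PA yields some $e'$ and $\alpha'<\epsilon_0$ with $\text{PA}\vdash \underline{e'}\co_\omega^{\underline{\alpha'}} A$ cut-free; combining this with the soundness schema gives $\text{PA}\vdash Tr_{\Sigma_k}(\ulcorner A\urcorner)$, and hence $\text{PA}\vdash A$.

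The principal obstacle is managing the complexity of formulas during cut elimination and the soundness argument: a single partial truth predicate must accommodate every formula that appears throughout the derivation, and the cut-elimination procedure must be shown to terminate inside PA at a height $<\epsilon_0$. Reducing first to cut-free derivations ensures that the subformula property applies and that a fixed $Tr_{\Sigma_k}$ suffices for the soundness induction. The necessary coding of $\omega$-derivations by indices of partial recursive functions, the ordinal assignment to nodes, and the cut-elimination bookkeeping are all standard and are developed in full in \cite{Buchholz} and \cite{Arai}.
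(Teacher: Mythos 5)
Your argument is correct and is essentially the standard one: the paper does not prove this statement (it cites Sundholm), but your two directions --- formalised embedding plus cut elimination via \cite{Buchholz}, and a soundness schema via a partial truth predicate and transfinite induction below $\epsilon_0$ --- are exactly the machinery the paper itself deploys in its proof of the analogous Theorem \ref{omega}. The only cosmetic point is that Definition \ref{treedef} already describes a cut-free calculus, so the cut-elimination step in your right-to-left direction is vacuous under the paper's conventions (though it would be needed, and works as you describe, if the system were formulated with a cut rule).
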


The analogous characterisation for selector provability is the following.
\begin{theorem}\label{omega}
    Let $A(x)$ be a formula with one free variable. Then $\text{PA}\vdash\forall x\Box_{\text{PA}}A(x)$ if and only if $\text{PA}\vdash \underline{e}\co_\omega^{\epsilon_0}\forall x A(x)$ for some $e$.
\end{theorem}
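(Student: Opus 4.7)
The plan is to establish each direction separately using standard formalised embeddings between PA and the $\omega$-calculus, with the bound $\leq \epsilon_0$ (as opposed to $< \epsilon_0$) arising precisely from one extra application of the $\omega$-rule at the root of the derivation, supplied uniformly by the selector.

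For the forward direction, starting from $\text{PA}\vdash \forall x\Box_{\text{PA}}A(x)$, I first extract a PA-recursive selector $s(x)$ with $\text{PA}\vdash \forall x\, s(x)\co_{\text{PA}}A(\underline{x})$ via the least-number principle, using $\Sigma_1$-soundness to ensure $s$ is genuinely recursive. The standard formalised embedding of PA into $\omega$-logic supplies an elementary-recursive function $h$ and an elementary-recursive ordinal assignment $\alpha(p)<\epsilon_0$ such that $\text{PA}\vdash \forall p\forall F(p\co_{\text{PA}}F\impl h(p)\co_\omega^{\alpha(p)}F)$. Composing with $s$ gives $\text{PA}\vdash \forall x\, h(s(x))\co_\omega^{\beta(x)} A(\underline{x})$ with $\beta(x)<\epsilon_0$ for every $x$. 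I then build a code $e^*$ for an $\omega$-derivation whose last inference is the $\omega$-rule on $\forall x A(x)$ with $n$-th premise given by $h(s(n))$; its height is $\sup_n(\beta(n)+1)\leq \epsilon_0$, and PA verifies that $e^*$ codes a valid $\omega$-derivation of $\forall x A(x)$ of height $\leq \epsilon_0$.

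For the backward direction, suppose $\text{PA}\vdash \underline{e}\co_\omega^{\epsilon_0}\forall x A(x)$. The key step is a formalised inversion: I produce a PA-recursive function $f(n)$ such that PA proves $\forall n\, f(n)\co_\omega^{\gamma(n)} A(\underline{n})$ with $\gamma(n)<\epsilon_0$. The construction descends recursively through the derivation tree, forwarding through non-$\forall$ inferences and selecting the $n$-th premise at each $\omega$-rule targeting $\forall x A(x)$. The crucial fact is that $\epsilon_0$ is a limit ordinal: any height of the form $\sup_n(\delta_n+1)$ bounded by $\epsilon_0$ forces each $\delta_n<\epsilon_0$, so the resulting subderivations lie strictly below $\epsilon_0$ regardless of whether the original derivation reached $\epsilon_0$. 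The standard converse embedding then converts each such subderivation uniformly into a PA-proof of $A(\underline{n})$, yielding a PA-recursive selector $s(n)$ with $\text{PA}\vdash \forall n\, s(n)\co_{\text{PA}}A(\underline{n})$.

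The main technical obstacle is the formalised inversion and ordinal bookkeeping in the backward direction: verifying inside PA that the extracted height $\gamma(n)$ genuinely satisfies $\gamma(n)<\epsilon_0$ uniformly in $n$ requires tracking ordinal assignments through the derivation using the representation from Lemma \ref{Sommer}, and depends essentially on the limit character of $\epsilon_0$. Once the inversion is formalised, the rest is routine composition of the known embedding results.
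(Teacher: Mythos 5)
Your forward direction is essentially the paper's: extract a selector, push each PA-proof of $A(\underline{n})$ through the formalised embedding into $\omega$-logic, and close with a single $\omega$-rule at the root, giving height $\leq\epsilon_0$. (One caveat: the target calculus here is cut-free, so ``the standard formalised embedding'' must include formalised cut elimination --- this is exactly where the heights $\beta(n)$ climb cofinally towards $\epsilon_0$; the paper does this via Buchholz's operators $E^{k_n}d_n$.)

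The backward direction has a genuine gap. After your inversion you hold, verifiably in PA, codes $f(n)$ for cut-free $\omega$-derivations of $A(\underline{n})$ of height $\gamma(n)\prec\epsilon_0$, and you then invoke a ``standard converse embedding'' converting these ``uniformly into a PA-proof of $A(\underline{n})$''. No such proof-to-proof translation exists: every true arithmetical sentence admits a cut-free $\omega$-derivation of very small height, so a syntactic converse embedding would make PA prove all true sentences. What is true is the boundedness theorem quoted just before the statement (PA-verifiable $\omega$-derivability of height $\prec\epsilon_0$ implies PA-provability), but its proof is a \emph{reflection} argument, not a translation: one applies transfinite induction up to $\gamma(n)$ to the predicate ``every node of ordinal $\alpha$ carries a sequent containing a formula true under $Tr_{q+1}$'', where $Tr_{q+1}$ is a partial truth predicate for the bounded degree of formulas in the tree. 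Crucially, to obtain \emph{selector} provability rather than mere instance provability, this argument must be run uniformly inside PA, which requires $\text{PA}\vdash(\forall\alpha\prec\epsilon_0)\Box_{\text{PA}}\text{TI}(\alpha,T)$ --- the selector-provability of transfinite induction below $\epsilon_0$ (Example \ref{TI}), i.e.\ formalised Gentzen. That is the technical heart of this direction and it is absent from your outline; the inversion and ordinal bookkeeping you identify as the main obstacle are comparatively routine, and the paper in fact avoids inversion entirely by working with the depth-one sequents $\{\forall x A(x), A(\underline{n})\}$ directly and extracting $A(\underline{n})$ via the Tarski conditions for $Tr_{q+1}$.
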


Let $\{e\}(x)$ be a universal partial recursive function (of one variable). For index $e$, we define $supp(e):= \{n\;|\; \{e\}(n)\halt\land \{e\}(n)\neq 0\}$.

For our formal definition of $\omega$-logic derivations, we will use labelled sequents of the form $<r,\Gamma, A,\alpha>$, with $r$ (the label of) a rule, $\Gamma$ a finite set of formulas, $A$ a formula in $\Gamma$ (the principal formula of the inference) or $0$, and $1\preceq\alpha\preceq \epsilon_0$.\footnote{We assume that the coding is such that $0$ is neither a formula nor the label of a sequent.} For a finite sequence $\sigma$, we use $rule(\sigma)$, $seq(\sigma)$, $PF(\sigma)$, and $o(\sigma)$ as shorthand for $\pi_0(\{e\}(x))$, $\pi_1(\{e\}(x))$, $\pi_2(\{e\}(x))$, and $\pi_3(\{e\}(x))$ respectively with $e$ implicit.

\begin{definition}\label{treedef}
    The relation $e\co_\omega^\alpha A$ stating that $e$ is a proof tree with height $\preceq \alpha$ holds if
\begin{enumerate}
    \item $e$ is total and for all $x$, $\{e\}(x)$ is $0$ or a sequent label
    \item $supp(e)$ is a tree
    \item $seq([]) = \{A\}$ and $o([])\preceq \alpha$
    \item $e$ is locally correct, i.e. for any $\sigma\in supp(e)$
        \begin{enumerate}
            \item If $\sigma$ is a leaf node (i.e. $\forall x(\sigma*[x]\not\in supp(e)$) then $rule(\sigma)=Ax$, $PF(\sigma)=0$, and $seq(\sigma)$ contains a true atomic sentence
            \item If $r(\sigma)=\land$, then there are sentences $A,B$ such that $PF(\sigma)=A\land B$, $\sigma*[0],\sigma*[1]\in supp(e)$, $seq(\sigma*[0])= seq(\sigma)\cup\{A\}$, and $seq(\sigma*[1])= seq(\sigma)\cup\{B\}$
            \item If $r(\sigma)=\lor$, then there are sentences $A,B$ such that $PF(\sigma)$ is $A\lor B$ or $B\lor A$, $\sigma*[0]\in supp(e)$, and $seq(\sigma*[0])= seq(\sigma)\cup\{A\}$
            \item If $r(\sigma)= \exists$, then there is $n$, $A$, and a variable $x$ such that $PF(\sigma) = \exists x A$, $\sigma*[0]\in supp(e)$, and $seq(\sigma*[0])= seq(\sigma)\cup A(x/\underline{n})$
            \item If $r(\sigma)=\omega$, then there is a formula $A$ and a variable $x$ such that $PF(\sigma)=\forall x A$ and for each $n$, $\sigma*[n]\in supp(e)$ with $seq(\sigma*[n])=seq(\sigma)\cup \{A(x/\underline{n})\}$
            \item If $r(\sigma)=Rep$, then $PF(\sigma)=0$, $\sigma*[0]\in supp(e)$, and $seq(\sigma*[n])=seq(\sigma)$
        \end{enumerate}
    \item If $\sigma,\tau \in supp(e)$ and $\sigma\subsetneq \tau$, then $o(\tau)\prec o(\sigma)$
\end{enumerate}
\end{definition}

\begin{proof}[Proof of Theorem \ref{omega}]
    For the right-to-left direction, suppose $\text{PA}\vdash(\underline{e}\co_\omega^{\epsilon_0}\forall x A(x))$ with $A$ having degree $q$. By an induction on the length of sequences, PA proves that every formula occurring in the proof is a sentence of degree $\leq q+1$. Let $Tr_{q+1}$ be a partial truth predicate for sentences of degree $\leq q+1$ satisfying the Tarski conditions and define the formula 
    \[T(\alpha):= (\forall \sigma\in supp(\underline{e}))(o(\sigma)=\alpha\impl (\exists B\in seq(\sigma))Tr_{\Pi_{q+1}}(B)))\]

    It is straightforward to see that $\text{PA}\vdash \text{Prg}(T(\alpha)$: the local correctness conditions for $\omega$-logic are essentially the truth conditions for sentences. It follows from Proposition \ref{TI} that $\text{PA}\vdash \forall \alpha\prec\epsilon_0\Box_{\text{PA}}\text{TI}(\alpha,T)$.
    
    Working in PA, $r([])$ is $\omega$ or $Rep$. Suppose that it is $\omega$. Fixing $n$, we have $[n]\in supp(\underline{e})$, $seq([n])=\{\forall x A(x), A(\underline{n})\}$, and $o([n])\prec\epsilon_0$. By strong $\Sigma_1$ completeness these facts are provable in PA, so $\Box_{\text{PA}}(\exists B\in seq([n]))Tr_{q+1}(B)$, $\Box_{\text{PA}}(Tr_{q+1}(\forall x A(x))\lor Tr_{q+1}( A(\underline{n})))$, and $\Box_{\text{PA}}A(n)$ as required. The $r([])=Rep$ case is similar.

    The left-to-right direction amounts to a formalisation of the embedding of PA into $\omega$-logic with cut and of the cut elimination process. Given this, one can stitch together the proof trees for each $n$ and apply the $\omega$ rule to get the required proof of $\forall x A(x)$. For the formalisation, we employ the notation system of Buchholz \cite{Buchholz}.\footnote{Alternatively, one can directly formalise the embedding and cut elimination process through effective operators on the indices of partial recursive functions. See \cite{MintsCtsCE},\cite{Sundholm}.} In the notation system, if $d$ is a PA derivation of a sentence $B$, then $E^kd$ where $k$ is the cut degree of $d$ is a notation for a cut-free $\omega$-logic proof of $B$ with height $\prec\epsilon_0$. Furthermore, there are primitive recursive functions $\mathfrak{r}(h)$, $\mathfrak{e}(h)$ $\mathfrak{s}_n(h)$, $\mathfrak{o}(h)$ returning the final rule, a notation for the $n$-th subderivation, endsequent and ordinal assigned to the notation $h$. It can be proven in $I\Sigma_1$ that these functions satisfy local and ordinal correctness conditions (see \cite{Buchholz} 3.8). 

    Suppose that $\text{PA}\vdash \forall x \Box_{\text{PA}}A(x)$. Then there is a PA-recursive function $n\mapsto d_n$ such that $d_n$ is a PA proof of $A(\underline{n})$. One can then construct an $\omega$-logic derivation of height $\leq \epsilon_0$ by taking $f([])=<\omega, \{\forall x A(x)\}, \forall x A(x), \epsilon_0>$ and can constructing the $n$th subderivation of $\forall x A(x),A(\underline{n})$ recursively from $E^{k_n}d_n$ where $k_n$ is the cut degree of $d_n$ using the functions of the notation system. Take $e$ such that $\text{PA}\vdash \forall x (\{\underline{e}\}(x)=f(x))$. Properties (1)-(3) will follow directly from the construction while (4) and (5) can be verified using the corresponding properties in \cite{Buchholz} 3.8. 
\end{proof}

\begin{remark}
    From (the proof of) Theorem \ref{omega}, it can be seen that if PA selector proves a formula $A(x)$ and the proofs are such that the height of resulting cut free $\omega$-logic derivations of $A(\underline{n})$ are uniformly bounded by some $\alpha<\epsilon_0$ (verifiably in PA), then $\text{PA}\vdash \forall x A(x)$. However, for this to occur there must be a uniform bound on the cut rank of the derivations of each $A(\underline{n})$, i.e. these derivations must all occur in some finite fragment of PA, so this amounts to the compactness condition from earlier.
\end{remark}

Finally, we provide an analysis of the recursive functions whose totality can be selector proven in PA. From Remark \ref{complowerbound}, $\text{PA}\vdash \forall x \Box_{\text{PA}}F_{\epsilon_0}(F_{\underline{\alpha}}(x))$ for each $\alpha<\epsilon_0$. This is optimal.

\begin{theorem}\label{companalysis}
    If $f$ is a recursive function such that $\text{PA}\vdash \forall \vec{x}\Box_{\text{PA}} f(\vec{x})\halt$, then $f$ is dominated by $F_{\epsilon_0}(F_{\alpha}(max\{\vec{x}\}))$ for some $\alpha<\epsilon_0$.
\end{theorem}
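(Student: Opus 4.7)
The plan is to apply Theorem~\ref{omega} to obtain a verifiable $\omega$-logic derivation of $\forall \vec{x}\, f(\vec{x})\halt$ of height $\leq \epsilon_0$ and to extract bounds on each $f(\vec{n})$ from the heights of its subtrees. Theorem~\ref{omega} (after coding $\vec{x}$ into a single variable if necessary) gives $e$ with $\text{PA}\vdash \underline{e}\co_\omega^{\epsilon_0}\forall \vec{x}\, f(\vec{x})\halt$; by $\Sigma_1$-soundness, $e$ genuinely codes a cut-free $\omega$-logic proof tree with the stated endsequent and height $\leq \epsilon_0$. For each $\vec{n}$, descend through the $\omega$- and Rep-rules at the root that successively instantiate the universal quantifiers to reach a subderivation with endsequent containing $f(\vec{n})\halt$ and ordinal $\beta_{\vec{n}} < \epsilon_0$ (strictly less, by condition~(5) of Definition~\ref{treedef}).

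The map $\vec{n} \mapsto \beta_{\vec{n}}$ is obtained by elementary operations on the PA-provably total function $\{e\}$, so it is itself PA-recursive. By Theorem~\ref{class}(2) it lies in $\mathcal{E}(F_\gamma)$ for some $\gamma < \epsilon_0$; enlarging $\gamma$ if needed, we may assume $\beta_{\vec{n}} \prec \omega_{F_\gamma(\max\{\vec{n}\})+1}$ for all $\vec{n}$, using that the encoding of an ordinal $\alpha \prec \epsilon_0$ elementarily bounds the least $m$ with $\alpha \prec \omega_m$.

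The key technical ingredient is the standard witness-extraction lemma for cut-free $\omega$-logic: a cut-free derivation of height $\beta$ of a closed $\Sigma_1$ sentence $\exists y\, \theta(\vec{n},y)$ with $\theta$ elementary produces a witness bounded by $F_\beta(\max\{\vec{n}\})$, up to a fixed elementary adjustment absorbable into $\gamma$. Applied to our situation, $f(\vec{n}) \leq F_{\beta_{\vec{n}}}(\max\{\vec{n}\})$. Unpacking $F_{\epsilon_0}(x) = F_{\omega_{x+1}}(x)$ and using monotonicity via Lemma~\ref{redprop}, this yields
\[f(\vec{n}) \leq F_{\beta_{\vec{n}}}(\max\{\vec{n}\}) \leq F_{\omega_{F_\gamma(\max\{\vec{n}\})+1}}(F_\gamma(\max\{\vec{n}\})) = F_{\epsilon_0}(F_\gamma(\max\{\vec{n}\})),\]
which is the required bound.

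The main obstacle is the cut-free witness-extraction lemma: this is the heart of the Schwichtenberg--Wainer ordinal analysis and, while standard, it requires careful adaptation to the specific formalization of $\omega$-logic derivations in Definition~\ref{treedef} — in particular one must track how the $\exists$-witnesses chosen along a branch are bounded by the Wainer function at the branch's ordinal height. A secondary point is controlling the CNF-depth $m$ with $\beta_{\vec{n}} \prec \omega_m$ in terms of a PA-recursive function of $\vec{n}$, but this reduces to an elementary manipulation of ordinal codes and can be absorbed by increasing $\gamma$.
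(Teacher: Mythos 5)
Your overall architecture (extract an ordinal bound $\beta_{\vec{n}}<\epsilon_0$ for each instance, observe that $\vec{n}\mapsto\beta_{\vec{n}}$ is PA-recursive and hence controlled by some $F_\gamma$, then collapse $F_{\beta_{\vec{n}}}$ into $F_{\epsilon_0}\circ F_\gamma$ via Lemma \ref{redprop}) matches the paper's. But there is a genuine gap at the step you yourself flag as the ``main obstacle'': the witness-extraction lemma you invoke is \emph{false} for the $\omega$-logic system of Definition \ref{treedef}. In that system the $\exists$-rule (clause 4(d)) places no constraint whatsoever on the witness $n$ it introduces, so a derivation of height $2$ --- one $\exists$-inference above an axiom leaf --- can derive $\exists y\,\theta(\vec{n},y)$ with an arbitrarily large witness. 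The ordinal height of an uncontrolled cut-free $\omega$-derivation therefore does not bound the witnesses of its $\Sigma_1$ endsequent, and no ``fixed elementary adjustment'' repairs this. The bounding lemma you need is a theorem about \emph{input-controlled} infinitary systems, where a declaration $m:N$ restricts which numerals may enter; this is exactly Lemma \ref{complemma}(3) for Wainer's judgements $m:N\vdash^\alpha_r A$, and it is not inherited by the derivations of Definition \ref{treedef}. One could try to argue that the particular tree produced by Theorem \ref{omega} happens to be controlled, since it is assembled from genuine PA-proofs via the Buchholz notation system, but making that precise amounts to re-running the embedding and cut-elimination inside the controlled calculus --- at which point the detour through Theorem \ref{omega} buys nothing.

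The paper's proof avoids this entirely: it never uses Theorem \ref{omega} or Definition \ref{treedef} here. Instead it takes the PA-recursive map $\vec{n}\mapsto h_{\vec{n}}$ of finite PA-proofs of $\exists y\vec{z}\,E(\vec{n},y,\vec{z},\vec{m})$ supplied by the selector, embeds each $h_{\vec{n}}$ into Wainer's controlled system with ordinal $\omega\cdot d(h_{\vec{n}})$ and cut rank $r(h_{\vec{n}})$ (Lemma \ref{complemma}(1)), eliminates cuts to reach $\alpha=\omega_{r(h_{\vec{n}})}(\omega\cdot d(h_{\vec{n}}))$ (Lemma \ref{complemma}(2)), and only then applies the bounding lemma (Lemma \ref{complemma}(3)), which is sound precisely because of the input control. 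The final step --- defining $g(\vec{n})=\max\{\vec{n},\vec{m},|\alpha|\}$, using $\epsilon_0\to_{g(\vec{n})}\alpha$ to get $F_{\epsilon_0}(g(\vec{n}))\geq F_\alpha(g(\vec{n}))\geq f(\vec{n})$, and dominating the PA-recursive $g$ by some $F_\beta$ --- is essentially the collapse you sketched. To repair your argument, replace your appeal to Theorem \ref{omega} and the unstated witness-extraction lemma with Lemma \ref{complemma} applied instance by instance.
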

Thus, PA does not selector prove the totality of $F_{\epsilon_0}^2(x)$ or of $F_{\alpha}$ for any $\alpha>\epsilon_0$.

To establish this result, we employ the infinitary proof system found in \cite{Wainer}. This system involves judgements of the form $n:N\vdash^\alpha A$ where $N$ is intended as a finite approximation to the natural numbers used to bound existential quantifiers in $\Sigma_1$ sentences. The relevant facts are as follows:

\begin{lemma}(\cite{Wainer} 2.23, 3.11, 3.15, 4.7)\label{complemma}
\begin{enumerate}
    \item If $h$ is a PA proof of $A$ with depth $d$ and cutrank $r$, then $m:N\vdash^{\omega \cdot d}_rA$ where $m$ is the maximum of all numerical parameters occuring in $A$
    \item If $m:N\vdash^\alpha_rA$, then $m:N\vdash^{\omega_r(\alpha)}_0A$
    \item If $E$ is an elementary recursive relation symbol and $ max\{\vec{m}\}\vdash^\alpha_0 \exists \vec{y}E(\vec{y},\vec{m})$, then $\mathbb{N}\models (\exists \vec{y}\leq F_{\alpha}(max\{\vec{m}\}))E(\vec{y},\vec{m})$
\end{enumerate}
\end{lemma}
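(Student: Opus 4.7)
The three parts of Lemma \ref{complemma} are standard ingredients of ordinal analysis, and the plan is to follow the Wainer-style route through embedding, cut-elimination, and witness-extraction. Throughout, derivations are viewed as well-founded trees labelled with sequents, ordinal heights $\alpha$, cut-ranks $r$, and a parameter $m:N$ tracking numerical content (with $N$ acting as a finite approximation to $\mathbb{N}$ used to bound existential witnesses).

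For part (1), induct on the depth $d$ of the PA-proof $h$. Axioms translate into the infinitary axioms, and finitary propositional rules into their infinitary counterparts with height increment $1$. The key cases are the quantifier and induction rules. A PA derivation of $\forall x A(x)$ via the eigenvariable rule becomes, after substituting each numeral $\underline{n}$ for the eigenvariable, a family of subproofs of $A(\underline{n})$, each of height bounded by the induction hypothesis; the $\omega$-rule then collapses them into a single inference and adds $\omega$ to the height, which is why PA-depth $d$ translates to infinitary height $\omega\cdot d$. Induction as a rule is simulated similarly: from $\Gamma,A(0)$ and $\Gamma,\neg A(x),A(Sx)$ one builds, for each $n$, a finite iterated modus-ponens chain producing $\Gamma,A(\underline{n})$, then applies the $\omega$-rule. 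Cuts in $h$ pass to cuts of the same rank in the infinitary proof, so cut-rank $r$ is preserved; the bound $m$ is maintained by folding in the numeral at the point of $\omega$-rule application.

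For part (2), run the standard one-rank cut-elimination: by induction on $r$, any derivation of height $\alpha$ and cut rank $r+1$ transforms into one of height $\omega^{\alpha}$ with cut rank $r$. The technical core is the cut-reduction lemma: given $m{:}N\vdash^{\alpha_0}_{r}\Gamma,A$ and $m{:}N\vdash^{\alpha_1}_{r}\Gamma,\neg A$ with $A$ of rank $r+1$, produce $m{:}N\vdash^{\alpha_0\#\alpha_1}_{r}\Gamma$ using cuts only on proper subformulas of $A$, where $\#$ is the natural sum. Iterating rank-reduction $r$ times yields the stated $\omega_r(\alpha)$ bound, and the parameter $m{:}N$ is untouched throughout since no new numerals are introduced.

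Part (3) is the principal locus of care and the place where the fast-growing hierarchy appears of its own accord. Proceed by induction on the height $\alpha$ of the cut-free derivation $\max\{\vec{m}\}\vdash^\alpha_0 \exists \vec{y}E(\vec{y},\vec{m})$, aiming to produce a witness bounded by $F_\alpha(\max\{\vec{m}\})$. Since the derivation is cut-free and the endformula is $\Sigma_1$, only the $\exists$-rule and propositional introduction rules can occur at the root. In the $\exists$-case a numeral $\underline{n}\leq N$ is produced together with a subderivation of $E(\underline{n},\vec{m})$, and since $E$ is elementary recursive and all leaves are true atomic sentences, cut-freeness certifies $\mathbb{N}\models E(n,\vec{m})$. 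The hard part is justifying that $N=F_\alpha(\max\{\vec{m}\})$ suffices: at successor heights $\alpha=\beta+1$ the inductive witness lies below $F_\beta(\max\{\vec{m}\})\leq F_\alpha(\max\{\vec{m}\})$; at limits $\alpha=\lambda$ the $\omega$-rule branch chosen using the parameter $\max\{\vec{m}\}$ has height $\lambda[\max\{\vec{m}\}]$, giving by induction a witness below $F_{\lambda[\max\{\vec{m}\}]}(\max\{\vec{m}\})=F_\lambda(\max\{\vec{m}\})$. The definition of $F_\alpha$ via fundamental sequences is thus exactly what the inductive step requires, and the main obstacle is the careful bookkeeping required to ensure the parameter $N$ is uniformly updated through all $\omega$-rule applications so that this matching of branch-indices to fundamental sequences actually goes through.
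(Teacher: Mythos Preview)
The paper does not prove this lemma; it is stated with a citation to \cite{Wainer} (items 2.23, 3.11, 3.15, 4.7) and used as a black box in the proof of Theorem \ref{companalysis}. There is therefore no proof in the paper to compare your attempt against.

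That said, your outline tracks the standard embedding / cut-elimination / bounding route one finds in the cited source, and parts (1) and (2) are accurately sketched. One caution on part (3): a cut-free derivation of the $\Sigma_1$ sentence $\exists\vec{y}\,E(\vec{y},\vec{m})$ enjoys the subformula property, so no universal quantifiers occur anywhere in it and hence no $\omega$-rule applications in the ordinary sense appear; your limit case as written (``the $\omega$-rule branch chosen using the parameter $\max\{\vec m\}$'') therefore does not apply. In Wainer's system the limit-ordinal heights, and their interaction with the numerical parameter, arise instead from the rules governing the declaration $m{:}N$ itself (an accumulation-type rule with infinitely many premises), and it is there that the identity $F_\lambda(m)=F_{\lambda[m]}(m)$ is invoked. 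With that adjustment the shape of your argument is correct.
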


We may now prove the result.

\begin{proof}[Proof of Theorem \ref{companalysis} ]
Suppose that PA selector proves the totality of $f$. Then there is an elementary recursive relation symbol $E$ and $\vec{m}$ such that $\exists \vec{z}E(\vec{x},y,\vec{z},\underline{\vec{m}})$ defines the graph of $f$ and $\text{PA}\vdash \forall \vec{x} \Box_{\text{PA}}\exists y\vec{z}E(x,y,\vec{z},\underline{\vec{m}})$. Let $\vec{n}\mapsto h_{\vec{n}}$ be a PA recursive function mapping $\vec{n}$ to a proof $h_{\vec{n}}$ of $\exists y\vec{z}E(\underline{\vec{n}},y,\vec{z},\vec{m})$, let $d(h)$ and $r(h)$ denote the depth and cutrank of a PA derivation $h$, and define
\[g(\vec{n}) = max\{\vec{n},\vec{m},|\omega_{r(h_{\vec{n}})}(\omega \cdot d(h_{\vec{n}}))|\}\]
Fixing $\vec{n}$, Lemma \ref{complemma} yields $\mathbb{N}\models (\exists y\leq F_{\alpha}(max\{\underline{\vec{n}},\underline{\vec{m}}\})\exists\vec{z}E(\underline{\vec{n}},y,\vec{z},\underline{\vec{m}})$ with $\alpha=\omega_{r(h_{\vec{n}})}(\omega \cdot d(h_{\vec{n}}))$. Since $\alpha<\epsilon_0$, Lemma \ref{redprop} gives that $\epsilon_0\to_{|\alpha|}\alpha$ and so $\epsilon_0\to_{g(\vec{n})}\alpha$ since $g(\vec{n})\geq |\alpha|$. Thus $F_{\epsilon_0}(g(\vec{n}))\geq F_\alpha(g(\vec{n}))\geq F_\alpha(max\{\vec{n},\vec{m}\})\geq f(\vec{n})$.

Since $g$ is PA recursive, there is $\beta$ such that $g$ is dominated by $F_\beta(\max\{\vec{x}\})$ and so $f$ is dominated by $F_{\epsilon_0}(F_{\beta}(max\{\vec{x}\}))$ as required.
\end{proof}

\section{The Complexity of Selectors}
In the formulation of selector provability, we allowed the selector to be any provably recursive function in $S$. While primitive recursive functions seem more than sufficient in natural cases, so that in practice $I\Sigma_0+\text{Exp}$ or $I\Sigma_1$ suffice as a base theory, we now show that in principle the full proof-theoretic strength of $T$ is needed.\footnote{This is the case if we identify the proof-theoretic strength of $T$ with its supply of provably recursive functions.}

From Theorem \ref{class}, we can see that the provably recursive functions of PA can be classified by their position in the Wainer hierarchy. This leads to a corresponding classification of selector-provable schemes in terms of the minimal position of any selector. The result is the following.

\begin{theorem}\label{hier}
    For each $2\leq \alpha<\epsilon_0$, there is a $\Delta_1$ formula that is selector provable in PA by a selector belonging to $\mathcal{E}(F_\alpha)$, but not by any selector belonging to $\mathcal{E}(F_\beta)$ for $\beta<\alpha$.
\end{theorem}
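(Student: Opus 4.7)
For each $2 \leq \alpha < \epsilon_0$ I would take
\[A_\alpha(x) \;:=\; \exists y\,\bigl(F_\alpha(x) = y \,\land\, \forall z \leq y\; \neg(z \co_{\text{PA}} \bot)\bigr),\]
using the $\Sigma_0$ graph of $F_\alpha$ from Lemma \ref{Sommer}. Since PA proves $F_\alpha$ is total (Theorem \ref{class}), $A_\alpha$ is equivalent in PA to the $\Pi_1$ formula $\forall y\,(F_\alpha(x) = y \impl \forall z \leq y\,\neg(z \co_{\text{PA}} \bot))$, so it is $\Delta_1$ in PA. Because PA evidently proves $\text{Con}(\text{PA}) \impl \forall x\, A_\alpha(x)$, Proposition \ref{ig} hands us selector-provability.

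For the upper bound I would exhibit an explicit selector $s_\alpha \in \mathcal{E}(F_\alpha)$: on input $x$, compute $m := F_\alpha(x)$ (this is where the $\mathcal{E}(F_\alpha)$ complexity enters) and then, elementarily in $m$, use $\Sigma_0$-completeness to assemble a PA-proof of $A_\alpha(\underline{x})$ that witnesses $y$ by $\underline{m}$, combining short proofs of $F_\alpha(\underline{x}) = \underline{m}$ and of each $\neg(\underline{z} \co_{\text{PA}} \bot)$ for $z \leq m$. That $s_\alpha$ is actually a selector follows from the case analysis in the proof of Proposition \ref{ig} applied to $A_\alpha$.

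For the lower bound, I would suppose toward contradiction that some $s \in \mathcal{E}(F_\beta)$ with $\beta < \alpha$ is a selector for $A_\alpha$. The crucial point is that the unique witness of $\exists y$ in $A_\alpha(\underline{x})$ is $F_\alpha(x)$ itself, so every PA-proof of $A_\alpha(\underline{x})$ must encode this value. The plan is to combine the Herbrand-style witness bound of Lemma \ref{complemma}.3 (applied to the cut-free $\omega$-logic derivation obtained from parts .1--.2 of the same lemma) with an explicit bounded search to compute $F_\alpha(x)$ elementarily in $s(x)$, giving $F_\alpha \in \mathcal{E}(s) \subseteq \mathcal{E}(F_\beta)$ and contradicting Theorem \ref{class}.1.

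The hard part is tightening this witness-extraction. Lemma \ref{complemma}.3 bounds the witness by $F_{\omega_r(\omega d)}(\max\{x,\vec{m}\})$, where the cut-rank $r$ and depth $d$ of $s(x)$ themselves grow at an $\mathcal{E}(F_\beta)$-rate, potentially pushing the bound beyond anything useful for contradicting $\beta < \alpha$. Pulling off the argument likely requires either sharpening the formula so that its proofs have a structurally controlled witness-ordinal (e.g.\ by constraining the quantifier alternation depth available in any selector's output), or establishing a direct proof-complexity lower bound on the bounded-consistency conjunct $\forall z \leq F_\alpha(\underline{x})\,\neg(z \co_{\text{PA}} \bot)$.
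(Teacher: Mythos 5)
Your selector-provability claim and the $\mathcal{E}(F_\alpha)$ upper bound for $A_\alpha$ are fine, but the lower bound --- the entire content of the theorem --- does not go through, and you have in effect conceded this yourself in your final paragraph. The assertion that ``every PA-proof of $A_\alpha(\underline{x})$ must encode'' the value $F_\alpha(x)$ is not justified: a proof need not contain the numeral for the witness, and the only general tool for extracting a witness bound from a proof is cut elimination, whose output is $F_{\omega_r(\omega\cdot d)}(\max\{x,\vec m\})$ with $r$ and $d$ the cut-rank and depth of the given proof. Since a selector $s\in\mathcal{E}(F_\beta)$ produces proofs whose codes (hence whose cut-ranks) grow with $x$, the ordinal $\omega_r(\omega\cdot d)$ climbs cofinally to $\epsilon_0$, and the resulting bound $F_\alpha(x)\leq F_{\omega_r(\omega\cdot d)}(\cdots)$ is compatible with $\beta<\alpha$; it yields no contradiction with Theorem \ref{class}.1. (Indeed, this is exactly the calculation carried out in the proof of Theorem \ref{companalysis}, and there it only yields an $F_{\epsilon_0}\circ F_\beta$ upper bound, which is useless here.) Neither of your two proposed repairs is carried out, so the proof is incomplete at its essential step. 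It is also not obvious that $A_\alpha$ even has the desired property: one would need to rule out, e.g., short proofs of $A_\alpha(\underline{x})$ that route the bounded-consistency conjunct through $\mathrm{Con}(I\Sigma_k)$ for a suitable $k$ rather than through a numeral-by-numeral check.

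The paper avoids witness extraction entirely by building the lower bound into the formula via Rosser-style diagonalisation: it takes $G(x)$ with $S\vdash G(x)\bimpl(\forall y\leq f(x))\neg(y\co_T G(x))$ for $f=F_\alpha$. Then $\forall x\,G(x)$ is true, and any selector $g$ with $g(n)\leq f(n)$ would itself witness the true $\Sigma_1$ sentence $(\exists y\leq f(\underline{n}))(y\co_T G(\underline{n}))$, making $\neg G(\underline{n})$ provable and contradicting soundness; so every selector must dominate $F_\alpha$, and Theorem \ref{class}.1 immediately excludes $\mathcal{E}(F_\beta)$ for $\beta<\alpha$. The point is that a formula which speaks directly about the size of its own proofs converts ``short selector'' into ``refutation'' without any ordinal analysis. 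If you want to salvage your approach, you would need a genuinely new proof-complexity lower bound on the bounded consistency statements $(\forall z\leq y)\neg(z\co_{\mathrm{PA}}\bot)$, which is a much harder (and to my knowledge open-ended) undertaking than the theorem itself.
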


The main step is the following.
\begin{lemma}\label{comlem}
    Let $f(x)$ be provably recursive in $T$. There is a $\Delta_1$ formula $G(x)$ selector provable in $T$ but for which any selector dominates $f$. Furthermore, if the graph of $f$ is $\Sigma_0$ definable then $G$ is selector provable by a function elementary recursive in $f$. 
\end{lemma}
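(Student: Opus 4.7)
The plan is a Parikh-style diagonalisation that forces every $T$-proof of an instance $G(\underline n)$ to have code strictly larger than $f(n)$. Let $\phi(x,y)$ be the graph formula of $f$ and apply the diagonal lemma to produce $G(x)$ satisfying
\[S\vdash G(x)\leftrightarrow \forall y,z\,\bigl((y\co_T\ulcorner G(\dot x)\urcorner)\land \phi(x,z)\to y>z\bigr).\]
This $\Pi_1$ formula is $T$-provably equivalent, using $T\vdash \forall x\exists! z\,\phi(x,z)$, to the $\Sigma_1$ formula $\exists z(\phi(x,z)\land \forall y\leq z\,\neg(y\co_T\ulcorner G(\dot x)\urcorner))$, hence $T$-provably $\Delta_1$. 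When $\phi$ is $\Sigma_0$ — the ``furthermore'' case — the same equivalence goes through already over $I\Sigma_0+\text{Exp}+f\halt$, so that $G$ is provably $\Delta_1$ there (replacing $f$ with the monotone majorant $x\mapsto \max_{y\leq x}f(y)$ if $f$ is not already monotone, which preserves the $\Sigma_0$-ness of the graph).

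Selector provability will follow from Proposition \ref{ig} applied over the chosen base ($T$ in the general case, $I\Sigma_0+\text{Exp}+f\halt$ in the furthermore case), once $\text{Con}(T)\to \forall x\,G(x)$ has been established there. I prove this by reasoning under the hypothesis of $\text{Con}(T)$: fix $x$ and suppose for contradiction that some $y_0\leq f(x)$ satisfies $y_0\co_T\ulcorner G(\dot x)\urcorner$. Hilbert--Bernays gives $\Box_T G(\dot x)$, and the $S$-internalised fixed-point equivalence then yields $\Box_T\forall y\leq f(\dot x)\,\neg(y\co_T\ulcorner G(\dot x)\urcorner)$. Specialising to $\dot y_0$ and combining with the $\Box_T$-images (via strong $\Sigma_1$ completeness) of the $\Sigma_1$ fact $y_0\leq f(x)$ and the $\Sigma_0$ fact $y_0\co_T\ulcorner G(\dot x)\urcorner$ produces $\Box_T\bot$, contradicting $\text{Con}(T)$. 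Proposition \ref{ig} now delivers $\forall x\,\Box_T G(x)$, and in the furthermore case Proposition \ref{er} converts the resulting implicit selector into an explicit one in $\mathcal E(f)$.

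For the lower bound I argue externally: $f$ is actually total by hypothesis and $\text{Con}(T)$ is true, so the previous step gives $\forall x\,G(x)$ in $\mathbb N$. If $s$ is any selector for $G$, then $s(n)$ is genuinely a $T$-proof of $G(\underline n)$; the truth of $G(\underline n)=\forall y\leq f(n)\,\neg(y\co_T\ulcorner G(\underline n)\urcorner)$ forces $s(n)>f(n)$, i.e.\ $s$ dominates $f$.

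The delicate step is the fixed-point argument securing selector provability: one must shuttle carefully between the external assumption $\text{Con}(T)$, the $\Sigma_0/\Sigma_1$ internalisations of $y_0\leq f(x)$ and the graph $\phi$, and the formalised fixed-point equivalence for $G$, while verifying that $G$'s $\Delta_1$-ness and each use of strong $\Sigma_1$ completeness and Proposition \ref{ig} are legitimate over the base theory in use.
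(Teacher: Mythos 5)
Your proof is correct and follows essentially the same route as the paper's: the identical diagonal fixed point $G(x)$ asserting that no $T$-proof of $G(\underline{x})$ lies at or below $f(x)$, the same external truth-plus-soundness argument showing any selector dominates $f$, and Proposition \ref{er} for placing the explicit selector in $\mathcal{E}(f)$; your detour through $\text{Con}(T)\impl\forall x\,G(x)$ and Proposition \ref{ig} merely repackages the paper's direct two-case derivation of $\forall x\,\Box_T G(x)$ (if $G(x)$, use strong $\Sigma_1$ completeness; if $\neg G(x)$, a $T$-proof of $G(x)$ exists outright). The one adjustment to make is in the general case: take the base theory to be $I\Sigma_0+\text{Exp}+f\halt$ (true, hence $\Sigma_1$-sound, and contained in $T$ since $T$ proves $f\halt$) rather than $T$ itself, because $T$ is not assumed $\Sigma_1$-sound, and both the standing conventions on the base theory and the passage from the implicit selector $\forall x\,\Box_T G(x)$ to a genuine recursive selector require that soundness.
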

Combining this with Lemma \ref{Sommer} and Theorem \ref{class} gives Theorem \ref{hier}.

\begin{proof}[Proof of Lemma \ref{comlem}]
We note consistency implies $\Pi_1$ soundness. Let $S= I\Sigma_0+\text{Exp}+f\halt$ so that $S\subseteq T$ and $S$ is sound. By diagonalisation, there is a formula $G(x)$ $\Delta_1$ in $S$ such that
\begin{equation}\label{G(x)}
    S\vdash G(x)\bimpl (\forall y\leq f(x))\neg (y\co_T G(x))
\end{equation}
 
First, note that $\forall x G(x)$ is true. If $G(\underline{n})$ were false, then by (\ref{G(x)}) there would be $k\leq f(n)$ such that $k\co_T G(\underline{n})$ witnessing this. But then $T\vdash G(\underline{n})$ contradicting the $\Pi_1$ soundness of $T$. It follows that if $g$ is a selector for $G(x)$ then $g$ must dominate $f$. Indeed, if $g(n)<f(n)$, then $S\vdash g(\underline{n})<f(\underline{n})$ as a true $\Sigma_1$ fact, and so $S\vdash \neg G(\underline{n})$ contradicting the soundness of $S$.

Working in $S$, if $G(x)$ holds, then $\Box_S G(x)$ by strong $\Sigma_1$ completeness, whereas if $\neg G(x)$, then (\ref{G(x)}) yields $(\exists y\leq f(x))(y\co_T G(x)$ and thus $\Box_S G(x)$. So $S\vdash \forall x \Box_SG(x)$ and $G(x)$ is selector provable in $S$. If $f$ satisfies the extra conditions in the hypothesis, then the resulting selector is elementary recursive in $f$. By effectively replacing instances of the $\text{Exp}$ and $f\halt$ axioms with proofs of them in $T$, the desired selector for $T$ in $\mathcal{E}(f)$ is obtained.
\end{proof}
The method of diagonalisation to obtain formulas with long proofs dates back to G\"odel in his Speed Up Theorem, the first published proof of which was given in \cite{BussSpeedup}. Similar results in terms of the number of symbols in a proof can be found in \cite{ParikhEFA}.

Theorem \ref{hier} was obtained for PA, but the phenomenon is much more general. Rathjen \cite{RathjenSSC} shows that the uniform $\Sigma_0$ definition of the graphs of $F_\alpha$ can be extended up to the Bachmann-Howard ordinal. It is clear that the analogous result can be obtained for various well-understood theories, such as the fragments $I\Sigma_n$, $ATR_0$ or $ID_1$.
 
Finally, we note that the schemes in the proof are constructed by self-reference and are rather artificial. It would be interesting to see if natural examples of this phenomenon could be found. In other words, are there "natural" formulas $F(\vec{x})$ such that $\text{PA}\vdash \forall \vec{x}\Box_{\text{PA}}F(\vec{x})$ but $I\Sigma_1\not\vdash \forall \vec{x}\Box_{\text{PA}}F(\vec{x})$?

\printbibliography

\end{document}